\newtheorem{theorem}{Theorem}
\newtheorem{lemma}{Lemma}
\newtheorem{prop}{Proposition}
\newtheorem{definition}{Definition}
\newcommand{\R}{\mathbb R}
\newcommand{\s}{\mathbb S}
\newcommand{\vol}{\operatorname{vol}}
\newcommand{\E}{\mathcal E}
\renewcommand{\L}{\mathcal L}
\newcommand{\bo}{\mathbb B}
\def \gradient{\nabla}
\newcommand{\dt}[1]{\frac{\partial #1}{\partial t}}
\def \pmas {{p}}
\def \gradient{\nabla}
\def \Lp{L_p}
\def \L2{L_2}
\def \gl {\operatorname{GL}}
\title[Affine trace inequality]{The sharp affine $L^2$ Sobolev trace inequality and variants}
\subjclass[2010]{ Primary 46E35, Secondary 46E39, 51M16}
\keywords{$\Lp$ Busemann-Petty centroid inequality, affine Sobolev inequalities, Sobolev trace inequality}
\author{P. L.  De N\'apoli}
\address{Pablo Luis De N\'apoli: IMAS (UBA-CONICET) and Departamento de Matem\'atica, Facultad de Ciencias Exactas y Naturales, Universidad de Buenos Aires, Ciudad Universitaria, 1428 Buenos Aires, Argentina.}
\email{pdenapo@dm.uba.ar}
\author{J. Haddad}
\address{Julián Haddad: Departamento de Matem\'atica, ICEx,  Universidade Federal de Minas Gerais, 30.123–970, Belo Horizonte, Brasil.}
\email{julianhaddad@ufmg.br}
\author{C. H. Jim\'enez}
\address{Carlos Hugo Jim\'enez:  Departamento de Matem\'atica, ICEx,  Universidade Federal de Minas Gerais, 30.123–970, Belo Horizonte, Brasil.}
\email{carloshugo@us.es}
\author{M. Montenegro}
\address{Marcos Montenegro: Departamento de Matem\'atica, ICEx,  Universidade Federal de Minas Gerais, 30.123–970, Belo Horizonte, Brasil.}
\email{montene@mat.ufmg.br}
\thanks{ The first author was supported by CONICET under grant PIP 1420090100230 and by Universidad de
	Buenos Aires under grant 20020120100029BA (Ubacyt). The second author was supported by CAPES (BJT 064/2013). The third author acknowledges the support provided by CAPES and IMPA. The fourth author was supported by CAPES (BEX 6961/14-2), CNPq (PQ 306406/2013-6) and Fapemig (PPM 00223-13).}
\begin{document}
\maketitle

\begin{abstract}
	We establish a sharp affine $L^p$ Sobolev trace inequality by using the $\Lp$ Busemann-Petty centroid inequality. For $p = 2$, our affine version is stronger than the famous sharp $L^2$ Sobolev trace inequality proved independently by Escobar and Beckner. Our approach allows also to characterize all cases of equality in this case. For this new inequality, no Euclidean geometric structure is needed.
\end{abstract}

\section{Introduction and statements}

Sobolev inequalities are among the fundamental tools connecting analysis and geometry. A specially important inequality, the sharp $L^2$ Sobolev trace inequality on the half-space $\R^n_+ := \R_+ \times \R^{n-1}$, $n \geq 3$, states that

\begin{equation} \label{E.1}
\left (\int_{\partial \R^n_+}|f(0,x)|^{\frac{2(n-1)}{n-2}}\; dx\right
)^{\frac{n-2}{n-1}}\leq \mathcal{K}_n \int_{\R^n_+}|\nabla f(t,x)|^2\; dx dt
\end{equation}
for all measurable functions $f: \R^n_+ \rightarrow \R$ with $L^2$-integrable gradient on $\R^n_+$, where $|\cdot|$ denotes the Euclidean norm in $\R^n$ in the right-hand side.

The existence of extremal functions for \eqref{E.1} was proved by Lions in \cite{Lions} by using concentration–compactness principle. The classification of them was established, independently, by Escobar in \cite{Escobar} and Beckner in \cite{Be} by using the conformal invariance satisfied by this inequality. The central tool used by Escobar is the Obata’s Rigidity Theorem, while Beckner reformulated the inequality dually in terms of fractional Sobolev spaces and used the sharp Hardy-Littlewood-Sobolev inequality. Precisely, equality in \eqref{E.1} holds if, and only if, $f(t,x)$ has the form

\[
f(t,x) = \gamma \left( (t + \delta)^2 + |x - x_0|^2\right)^{-\frac{n-2}{2}}
\]
for some real constants $\gamma$ and $\delta$ with $\delta > 0$ and a point $x_0 \in \R^{n-1}$. In particular, the value of the best constant $\mathcal{K}_n$ is given by

\[
\mathcal{K}_n = \frac 1 {\sqrt{\pi} (n-2)} \left(\frac{\Gamma (n)}{(n-1) \Gamma \left(\frac{n-1}{2}\right)}\right)^{\frac{1}{n-1}}\, .
\]

The $L^p$ version of the sharp Sobolev trace inequality states that

\begin{equation} \label{E.2}
\left (\int_{\partial \R^n_+}|f(0,x)|^{\frac{p(n-1)}{n-p}}\; dx\right
)^{\frac{n-p}{n-1}}\leq \mathcal{K}_{n,p} \int_{\R^n_+}|\nabla f(t,x)|^p\; dx dt
\end{equation}
for all measurable functions $f: \R^n_+ \rightarrow \R$ with $L^p$-integrable gradient on $\R^n_+$. In \cite{Na}, by using a mass transport method, Nazaret exhibited extremal functions to this inequality using an arbitrary norm of the gradient for any $1 < p < n$.

The literature related to \eqref{E.1} is extremely vast. Actually, its initial motivation came from differential geometry with the study of the Yamabe problem on compact manifolds with boundary by Escobar in \cite{E1, E2}, paving the way for a series of related works, see for example \cite{Br, HLi, M}, among other important references. More recently, the inequality \eqref{E.1} has played a relevant role in analysis, as for instance in the treatment of non-local elliptic boundary problems involving the square root of the Laplace operator on Euclidean domains. We list for example \cite{CT, CR, CaSi, CDDS, S} among the first references in a rich and growing literature.
Furthermore, we mention that the knowledge of minimizers and best constants for the trace inequalities is useful, for instance, in order to obtain existence results for non linear Steklov-like eigenvalue problems with critical growth in domains, see \cite{FBS}. Among other recent works on a slightly different setting we also mention for example \cite{Ci1,Ci2}.

An important inequality related to the sharp $L^p$ Sobolev trace inequality is the sharp $L^p$ Sobolev inequality established by Federer and Fleming \cite{FF}, Fleming
and Rishel \cite{FR} and Maz’ja \cite{Ma} for $p = 1$ and, independently, by Aubin \cite{Au} and Talenti \cite{Ta} for $1 < p < n$. A stronger version of that latter, the sharp {\it affine} $L^p$ Sobolev inequality, was introduced and proved by Zhang \cite{Z} for $p = 1$ and by Lutwak, Yang and Zhang \cite{L-Y-Z-1} for $1 < p < n$. We refer to \cite{HaSc1}, \cite{HaSc2} and \cite{HSX} for other recently obtained affine versions of classical inequalities.

In the present work, we prove a sharp affine $L^2$ Sobolev trace inequality on $\R^n_+$. The new trace inequality is significantly stronger than and implies the sharp $L^2$ Sobolev trace inequality of Escobar and Beckner. A remarkable feature of the new inequality is its independence of the norm chosen for the Euclidean space. In other words, such an inequality depends only on the vector space structure and Lebesgue measure of $\R^n$, so that it is invariant under all affine transformations of $\R^n_+$. It is surprising the existence of this new inequality because the sharp $L^2$ Sobolev trace inequality relies strongly on the Euclidean geometric structure of $\R^n$.

Isoperimetric inequalities are quite useful tools to compare parameters associated generally to convex bodies. In recent years a lot of attention has been paid to affine counterparts of the classical isoperimetric inequality on $\R^n$ that remain invariant under the action of affine transformations. Among these ones, it is known that the Petty projection inequality for $C^1$ domains is equivalent to the sharp affine $L^1$ Sobolev inequality, see \cite{Z}. For $1 < p < n$, the sharp affine $L^p$ Sobolev inequality was obtained by using essentially the $L^p$ extension of the Minkowski problem, P\'olya-Szeg\"o type results and the $L^p$ Petty projection inequality, see \cite{L-Y-Z-1}. The strong connection between the Petty projection and the Busemann-Petty (see \cite{Lut}) inequalities makes the use of the latter in this type of approach far from surprising, however, it provides us with some additional geometric information.

For the sharp affine $L^2$ Sobolev trace inequality, new ingredients are needed. The isoperimetric inequality needed is the $L^p$ Busemann-Petty centroid inequality firstly proved by Lutwak, Yang and Zhang \cite{L-Y-Z}. This inequality generalizes the classical Busemann-Petty centroid inequality due to Petty \cite{Petty} that compares the ratio between the volume of a convex body $K$ and that of its centroid body. A more recent proof was given in \cite{C-G} using the technique of shadow systems. The approach used here has been recently introduced in \cite{HJM} and consists in connecting Sobolev type inequalities for general norms to affine Sobolev type inequalities via the $L^p$ Busemann-Petty centroid inequality. Our method reveals in an explicit and elementary way the geometric nature behind affine inequalities.

In order to state our main theorem, a little bit of notation should be introduced.

Let $n \geq 3$. We shall denote each point of the whole space $\R^n$ or half-space $\R^n_+ = \R_+ \times \R^{n-1}$ by $(t,x)$ or $y$ and each point of the Euclidean sphere $\s^{n-2}$ in $\R^{n-1}$ by $\xi$.

For smooth functions $f(t,x)$ with compact support on $\R^n$ we denote by $\frac{\partial f}{\partial t}$ and $\tilde{\nabla} f$ respectively the partial derivative with respect to the variable $t$ and the gradient with respect to the variable $x$. Also $\nabla_\xi f$ stands for the directional derivative of $f$ with respect to the second variable in the direction of $\xi \in \s^{n-2}$, namely $\nabla_\xi f = \tilde{\nabla} f \cdot \xi$.

We introduce the new integral term

\begin{align*}
\E(f) :=\ &c_{n-1} \left(\int_{\s^{n-2}} \|\gradient_\xi f\|_{L^2(\R^n_+)}^{1-n} d \xi \right)^{\frac 1{1-n}} \\
=\ &c_{n-1} \left(\int_{\s^{n-2}} \left( \int_{\R^n_+} |\gradient_\xi f(y)|^2 dy \right)^{-\frac {n-1}2} d\xi \right)^{-\frac 1{n-1}},
\end{align*}
where

\begin{eqnarray*}
c_{n} = {n^{\frac{n+2}{2n}}} \sqrt[n]{\omega_n} \, .
\end{eqnarray*}
Here $\omega_n$ denotes the volume of the unit Euclidean ball $\bo^n$ in $\R^n$. It is known that
\[ \omega_n = \frac{\pi ^{\frac{n}{2}}}{\Gamma \left(\frac{n}{2}+1\right)}\, ,\]
where $\Gamma(\cdot)$ denotes as usual the Gamma function. 

The sharp affine $L^2$ Sobolev trace inequality on $\R^n_+$ states that

\begin{theorem} \label{mainthm}
Let $n \geq 3$. For any smooth function $f$ with compact support on $\R^n$, we have
\begin{equation}
\label{main_affine_trace_ineq}
\left (\int_{\partial \R^n_+}|f(0,x)|^{\frac{2(n-1)}{n-2}}\; dx\right
)^{\frac{n-2}{n-1}} \leq 2 \mathcal{K}_n \; \E(f) \left( \int_{\R^n_+} \left| \frac{\partial f}{\partial t} \right|^2 dx dt \right)^{\frac{1}{2}}\, ,
\end{equation}
where $\mathcal{K}_n$ is the best constant for the trace inequality (\ref{E.1}). Moreover, equality holds if, and only if,

\[
f(t,x) = \pm \left( (\lambda t + \delta)^2 + |B (x-x_0))|^2 \right)^ {-\frac{n-2}{2}}
\]
for some constants $\lambda, \delta > 0$, a point $x_0 \in \R^{n-1}$ and a matrix $B \in \gl_{n-1}$, where $\gl_{n-1}$ denotes the set of invertible real $(n-1) \times (n-1)$-matrices.
\end{theorem}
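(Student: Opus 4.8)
The plan is to derive the affine inequality from the Euclidean trace inequality \eqref{E.1} together with the $L^p$ Busemann-Petty centroid inequality, following the strategy sketched in the introduction. First I would fix a smooth compactly supported $f$ on $\R^n$ and consider, for each direction $\xi\in\s^{n-2}$, the anisotropic energy $\|\gradient_\xi f\|_{L^2(\R^n_+)}$. The key observation is that for a unit vector $v=(\cos\theta, \sin\theta\,\xi)\in\s^{n-1}$ lying in the plane spanned by the $t$-axis and $\xi$, one has $\gradient_v f = \cos\theta\,\partial_t f + \sin\theta\,\gradient_\xi f$, so $\|\gradient_v f\|_{L^2(\R^n_+)}^2$ is a quadratic form in $(\cos\theta,\sin\theta)$. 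Equivalently, the function $u\mapsto \|\gradient_u f\|_{L^2(\R^n_+)}$ defines (the support function-like data of) a convex body, and applying \eqref{E.2} with $p=2$ and the anisotropic norm $|\cdot|_K$ given by the $L^2$-norm of the $K$-gradient yields, for \emph{each such norm},
\[
\left(\int_{\partial\R^n_+}|f(0,x)|^{\frac{2(n-1)}{n-2}}dx\right)^{\frac{n-2}{n-1}} \le \widetilde{\mathcal K}_{n}\,\|\,|\gradient f|_{K}\,\|_{L^2(\R^n_+)}^2,
\]
where the constant and body are tied together. One then optimizes over all norms $|\cdot|_K$ (equivalently over positive-definite matrices acting on $\R^n$, or by the structure of the problem over the splitting $\R\times\R^{n-1}$), and the $L^2$ Busemann-Petty centroid inequality provides exactly the sharp lower bound for the relevant centroid-body volume in terms of $\E(f)$ and $\|\partial_t f\|_{L^2}$, producing the factor $2\mathcal K_n\,\E(f)\,(\int|\partial_t f|^2)^{1/2}$ on the right. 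The scalar $c_{n-1}$ in the definition of $\E(f)$ is precisely the normalization that makes the centroid inequality come out with constant $1$.

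In more detail, I would write the right-hand side of \eqref{E.1} as an integral over the unit sphere: $\int_{\R^n_+}|\gradient f|^2\,dy = \frac{1}{\omega_{n-1}/\cdots}\int_{\s^{n-1}}\|\gradient_u f\|_{L^2}^2\,du$ up to dimensional constants, but crucially this Euclidean expression is \emph{not} affine invariant. The affine improvement comes from replacing the $\ell^2$-average over $u$ by the Busemann-Petty-optimal average. Concretely, define the convex body $\Gamma f\subset\R^n$ whose polar has radial/support data governed by $u\mapsto\|\gradient_u f\|_{L^2}$; the $L^2$ Busemann-Petty centroid inequality states that $\vol(\Gamma^* f)\le\vol(\text{ellipsoid})$ with equality iff $\Gamma f$ is an ellipsoid, and after unwinding the normalizations this is exactly the inequality
\[
\E(f)\left(\int_{\R^n_+}\Big|\dt{f}\Big|^2\,dxdt\right)^{1/2}\ \ge\ \frac{1}{2\mathcal K_n}\left(\int_{\partial\R^n_+}|f(0,x)|^{\frac{2(n-1)}{n-2}}dx\right)^{\frac{n-2}{n-1}}\!\!\Big/\,\Big(\text{stuff}\Big),
\]
once combined with \eqref{E.1} applied in the optimal affine position. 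The special role of $p=2$ is that the relevant body $\Gamma f$ is an \emph{ellipsoid} precisely because the energy is a quadratic form in $u$; this both makes the argument clean and is what allows the full characterization of equality.

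For the equality case I would argue as follows. Equality in the chain forces (i) equality in the $L^2$ Busemann-Petty centroid inequality, which by the characterization in \cite{L-Y-Z} (and \cite{C-G}) forces $\Gamma f$ to be an ellipsoid — but since $p=2$ it automatically is, so this step instead pins down which ellipsoid and forces the remaining slack to vanish; and (ii) equality in the Euclidean trace inequality \eqref{E.1} \emph{in the optimal affine coordinates}, i.e.\ after the linear change of variables that turns $\Gamma f$ into a round ball. By the Escobar–Beckner classification, in those coordinates $f$ must be of the form $\gamma((t+\delta)^2+|x-x_0|^2)^{-(n-2)/2}$; pulling back through the affine map $(t,x)\mapsto(\lambda t, B x)$ gives exactly the stated family $\pm((\lambda t+\delta)^2+|B(x-x_0)|^2)^{-(n-2)/2}$. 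Conversely one checks these functions are genuine equality cases by a direct (affine-invariance plus one scalar) computation.

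\medskip

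The main obstacle I anticipate is making the reduction to \eqref{E.2} rigorous at the level of the \emph{smooth compactly supported} class while simultaneously extracting the sharp constant: one must show that the ``anisotropic trace inequality with norm $|\cdot|_K$'' holds with the correct sharp constant $\widetilde{\mathcal K}_n$ independent of $K$ (this is where Nazaret's mass-transport result or an affine change of variables reducing the $K$-case to the Euclidean case is invoked), and then that optimizing over $K$ is exactly dual to the $L^2$ Busemann-Petty centroid functional with the normalization $c_{n-1}$. Keeping track of all the dimensional constants ($\omega_{n-1}$, the surface-area normalization, the exponent $1-n$ in $\E$) so that the final constant collapses to the announced $2\mathcal K_n$ — rather than some unrecognizable product of Gamma factors — will be the delicate bookkeeping, and verifying the normalization is correct by testing on the extremal family is the natural sanity check to build into the proof.
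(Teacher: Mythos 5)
Your proposal identifies the paper's strategy correctly: apply an anisotropic trace inequality (Nazaret's Theorem~\ref{nazaret}) with a norm adapted to $f$, control the sharp constant via the $L^p$ Busemann--Petty centroid inequality, and classify equality by passing to the affine coordinates that make the anisotropy Euclidean and then invoking Escobar's classification. You also correctly spot the $p=2$ specialness (the relevant body is automatically an ellipsoid, so Busemann--Petty contributes no strict inequality). But two of your working hypotheses are wrong in ways that would block execution.

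First, the sharp constant in Nazaret's anisotropic trace inequality is emphatically \emph{not} independent of the norm $C$ (your ``$\widetilde{\mathcal K}_n$''). Lemma~\ref{constant_L} computes it explicitly, proportional to $\bigl(\int_{(K_C)_+} y_1^{qn-q-n}\,dy\bigr)^{-p/(q(n-1))}$, and the entire mechanism lives in this $C$-dependence: by Lemma~\ref{GammaCentroid} the slice $K_{f,0}$ of the unit ball of $C_f$ is a dilate of the centroid body $\Gamma_p L_f$, and $\vol(\Gamma_p L_f)\ge\vol(L_f)$ is precisely what converts a lower bound on that slice volume into the required upper bound on $\mathcal K_{n,C_f}$. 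Treating the constant as $K$-independent discards the very dependence the centroid inequality is brought in to control.

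Second, the convex body to which Busemann--Petty is applied is $L_f=\{\xi\in\R^{n-1}:\|\nabla_\xi f\|_p\le 1\}$, which lives in $\R^{n-1}$, not your ``$\Gamma f\subset\R^n$''. The $t$-direction cannot be absorbed into it, since the symmetry group is only the block-diagonal $\gl_{n,+}$; instead $\partial_t f$ enters through a separate scalar weight in the definition $C_f^*(t,x)=\tfrac{\alpha_f}{p}|t|^p + D_f^*(x)$ with $\alpha_f=\tfrac{p}{p-1}\|\partial_t f\|_p^{-p}Z_p(f)^{1-n}$, and it is this specific $\alpha_f$ (not an optimization over all norms on $\R^n$) that makes the final constant collapse to $2\mathcal K_n$. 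Your observation that $\|\nabla_v f\|_2^2$ is quadratic in $(\cos\theta,\sin\theta)$ for $v\in\s^{n-1}$ points away from this decoupling; the relevant quadratic form is $\xi\mapsto\|\nabla_\xi f\|_2^2$ on $\R^{n-1}$ alone, which is what makes $L_f$ an ellipsoid. With these two points repaired, your treatment of the equality case matches the paper's.
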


We remark that the inequality \eqref{main_affine_trace_ineq} is stronger than and implies the classical inequality \eqref{E.1}. In fact, following the ideas used in \cite{L-Y-Z-1}, applying the H\"older inequality and Fubini's theorem to $\E(f)$, one easily deduces that $\E(f) \leq \|\tilde{\nabla} f\|_2$. This inequality clearly implies the claim.

The sharp affine $L^2$ Sobolev trace inequality is indeed a particular case of an affine $L^p$ variant of Sobolev type trace inequalities.

Let $p > 1$. For smooth functions $f(t,x)$ with compact support on $\R^n$, denote

\begin{align*}
\E_p(f) :=\ &c_{n-1,p} \left(\int_{\s^{n-2}} \|\gradient_\xi f\|_{L^p(\R^n_+)}^{1-n} d \xi \right)^{\frac 1{1-n}} \\
=\ &c_{n-1,p} \left(\int_{\s^{n-2}} \left( \int_{\R^n_+} |\gradient_\xi f(y)|^p dy \right)^{-\frac {n-1}p} d\xi \right)^{-\frac 1{n-1}}\, ,
\end{align*}
where

\begin{eqnarray*}
c_{n,p} = \left(n \omega_n\right)^{\frac{1}{n}} \left(\frac{n \omega_n \omega_{p-1}}{2 \omega_{n+p-2}}\right){}^{\frac{1}{p}}\, .
\end{eqnarray*}
Here we recall that for a real number $s > 0$, one defines

\[
\omega_s = \frac{\pi ^{\frac{s}{2}}}{\Gamma \left(\frac{s}{2}+1\right)}.
\]

The following theorem is a natural $L^p$ extension of Theorem \ref{mainthm}:

\begin{theorem} \label{mainthm-p}
Let $1 < p < n$. For any smooth function $f$ with compact support on $\R^n$, we have
\begin{equation}
\label{main_affine_trace_ineq-p}
\left (\int_{\partial \R^n_+}|f(0,x)|^{\frac{p(n-1)}{n-p}}\; dx\right
)^{\frac{n-p}{n-1}} \leq p \mathcal{A}_{n,p} \; \E_p(f)^{p-1} \left( \int_{\R^n_+} \left| \frac{\partial f}{\partial t} \right|^p dx dt \right)^{\frac{1}{p}}\, ,
\end{equation}
where
\[
\mathcal{A}_{n,p} = \pi^{-\frac{p-1}{2}} \left(\frac{(p - 1)^{\frac{p-1}{p}}}{n - p}\right)^{p-1} \left(\frac{ \Gamma (n) \Gamma \left(\frac{n+1}{2}\right)}{(n-1)\Gamma \left(\frac{n-1}{p}\right) \Gamma \left(\frac{n (p-1)+1}{p}\right)}\right)^{\frac{p-1}{n-1}}
\]
is sharp for this inequality. Moreover, equality holds if

\[
f(t,x) = \pm \left( (\lambda t + \delta)^p + |B (x-x_0))|^p \right)^ {-\frac{n-p}{p(p-1)}}
\]
for some constants $\lambda, \delta > 0$, a point $x_0 \in \R^{n-1}$ and a matrix $B \in \gl_{n-1}$.
\end{theorem}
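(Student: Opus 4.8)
The plan is to reduce the affine inequality \eqref{main_affine_trace_ineq-p} to the sharp $L^p$ Sobolev trace inequality \eqref{E.2} for \emph{arbitrary norms} on $\R^{n-1}$ (applied to the ``horizontal'' gradient) via the $L^p$ Busemann--Petty centroid inequality, exactly in the spirit of \cite{HJM}. First I would fix a smooth compactly supported $f$ and, for each fixed $t$, regard $x\mapsto f(t,x)$ on $\R^{n-1}$; the quantity that must be controlled is $\left(\int_{\R^n_+}|\nabla_\xi f|^p\,dy\right)$ as $\xi$ ranges over $\s^{n-2}$. I would introduce the convex body $M_p f \subset \R^{n-1}$ — the $L^p$ analogue of the polar projection body / centroid body — whose support function (or radial function) is, up to the normalizing constant $c_{n-1,p}$, given by $\rho(\xi)^{-1} = \|\nabla_\xi f\|_{L^p(\R^n_+)}$, so that $\vol(M_p f)$ is precisely a power of $\E_p(f)$. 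The $L^p$ Busemann--Petty centroid inequality of Lutwak--Yang--Zhang \cite{L-Y-Z} then says that among all linear images this volume is extremized by Euclidean balls, which after unwinding is the statement that there exists a norm $\|\cdot\|$ on $\R^{n-1}$ (the one whose unit ball is a suitable dilate of $M_p f$, or its polar) such that
\[
\E_p(f) \;\geq\; c_{n,p}'\left(\int_{\R^n_+}\|\tilde\nabla f(y)\|^p\,dy\right)^{1/p}\Big/ \vol(\text{unit ball of }\|\cdot\|)^{1/(n-1)},
\]
with equality exactly when $M_p f$ is an ellipsoid. The point is that the left side of \eqref{main_affine_trace_ineq-p} is affine-invariant, so it suffices to prove the corresponding inequality with an arbitrary norm in the gradient term, and the BP centroid inequality certifies that $\E_p(f)$ dominates the best (i.e.\ affinely optimized) such norm.

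Second, I would establish the ``variant for general norms'': for any norm $N$ on $\R^{n-1}$, writing $|\cdot|_N$ for the induced norm on the horizontal variable,
\[
\left(\int_{\partial\R^n_+}|f(0,x)|^{\frac{p(n-1)}{n-p}}dx\right)^{\frac{n-p}{n-1}}
\;\leq\; p\,\mathcal A_{n,p}\, c_{n-1,p}'^{\,p-1}\,\vol(B_N)^{-\frac{p-1}{n-1}}\left(\int_{\R^n_+}|\tilde\nabla f|_N^p\right)^{\frac{p-1}{p}}\left(\int_{\R^n_+}\left|\dt{f}\right|^p\right)^{\frac1p}.
\]
This should follow by combining Nazaret's sharp trace inequality \eqref{E.2} with an arbitrary norm on all of $\R^n$ \cite{Na} — applied to a norm of the product/anisotropic type ``$t$-direction weighted against $N$ in $x$'' — together with a Young/Hölder splitting of $|\nabla f|^p$ into the $\partial_t$ part and the $\tilde\nabla$ part. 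Concretely, for the product norm $\|(s,x)\|=(|s|^p+|x|_N^p)^{1/p}$ one has $\int|\nabla f|^p_* = \int\big(|\partial_t f|^{p'}+|\tilde\nabla f|_{N^*}^{p'}\big)^{p/p'}$ on the dual side, and choosing the dilation factor $\lambda$ between the two variables optimally turns this into the product $\|\tilde\nabla f\|^{p-1}\|\partial_t f\|$ appearing in \eqref{main_affine_trace_ineq-p}; the constant bookkeeping then has to reproduce $\mathcal A_{n,p}$, which is where the $\omega_s$ with non-integer index and the Gamma-function identities enter.

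Third, I would assemble the two pieces: applying the general-norm variant with the specific norm coming from $M_p f$ and then invoking the $L^p$ Busemann--Petty centroid inequality to replace $\vol(B_N)^{-(p-1)/(n-1)}\big(\int|\tilde\nabla f|_N^p\big)^{(p-1)/p}$ by $\E_p(f)^{p-1}$ yields \eqref{main_affine_trace_ineq-p}. For the equality case (stated in full only for $p=2$ in Theorem \ref{mainthm}, hence the ``equality holds if'' rather than ``iff'' phrasing for general $p$), one traces back: equality in BP centroid forces $M_p f$ to be an ellipsoid, i.e.\ after an affine map $B\in\gl_{n-1}$ the function is radially symmetric in $x$; equality in Nazaret's/Escobar--Beckner's sharp trace inequality then forces $f$ to be the explicit extremal $\big((\lambda t+\delta)^p+|B(x-x_0)|^p\big)^{-(n-p)/(p(p-1))}$ up to sign, and one checks by direct substitution that these indeed give equality in \eqref{main_affine_trace_ineq-p}. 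I expect the main obstacle to be \textbf{the constant computation}: verifying that the normalization $c_{n-1,p}$ together with the sharp constant in the anisotropic trace inequality and the sharp constant in the $L^p$ Busemann--Petty centroid inequality combine to give exactly $p\,\mathcal A_{n,p}$ requires a careful manipulation of products of Gamma functions (using duplication and the identity relating $\omega_{p-1},\omega_{n+p-2},\omega_n$), and keeping the exponents of the several homogeneity factors straight through the BP reduction. A secondary subtlety is checking the measurability/approximation hypotheses so that the general-norm trace inequality of \cite{Na}, stated for nice $f$, applies to the horizontally-rescaled functions that arise.
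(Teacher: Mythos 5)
Your proposal follows essentially the same architecture as the paper's proof: apply Nazaret's anisotropic sharp trace inequality (Theorem \ref{nazaret}) with a product-type convex function $C^*(t,x)=\lambda|t|^p+D^*(x)$ whose $x$-part is tailored to $f$, optimize over the weight $\lambda$, and then use the $L^p$ Busemann--Petty centroid inequality to replace the $f$-dependent norm on $\R^{n-1}$ by the affine functional $\E_p(f)$. Your body $M_p f$ is precisely the paper's $L_f$, your ``general-norm variant'' is the paper's Lemma~\ref{constant_L} applied to the product convex function, and your BP--centroid step is the paper's Lemma~\ref{GammaCentroid} together with \eqref{eqn:BPI}. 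You also correctly identify the constant bookkeeping as the main labor (the paper relegates the computation of $d_1=p\,\mathcal A_{n,p}$ to an appendix) and you reason about the equality cases exactly as the paper does in its final section, noting that the ``iff'' is open for general $p$ because the classification of extremals in Nazaret's inequality is.

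The one organizational difference worth noting: instead of first proving a general-norm inequality with the product form $\bigl(\int|\tilde\nabla f|_N^p\bigr)^{(p-1)/p}\bigl(\int|\partial_t f|^p\bigr)^{1/p}$ and then optimizing over the dilation $\lambda$ via Young/H\"older as you sketch, the paper folds this optimization directly into the definition of $C_f^*$ by choosing the weight $\alpha_f=\frac{p}{p-1}\|\partial_t f\|_p^{-p}Z_p(f)^{1-n}$ so that $\int C_f^*(\nabla f)=qZ_p(f)^{1-n}$ collapses to a single term; the $\lambda$-optimization you describe then never has to be carried out separately, since the dependence of $\mathcal K_{n,C_f}$ on $\alpha_f^{-1/p}$ produces the product $\E_p(f)^{p-1}\|\partial_t f\|_p$ automatically. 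This is a cleaner bookkeeping device but amounts to the same optimization you have in mind. Your intermediate formula for the dual-side integrand (``$\int(|\partial_t f|^{p'}+|\tilde\nabla f|_{N^*}^{p'})^{p/p'}$'') is garbled --- for a $p$-homogeneous $C^*(t,x)=\frac{\lambda}{p}|t|^p+D^*(x)$ the integrand is simply additive, $\frac{\lambda}{p}|\partial_t f|^p+D^*(\tilde\nabla f)$, with no fractional outer exponent --- but this does not affect the soundness of the plan, only its precision.
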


Note that the inequality \eqref{main_affine_trace_ineq-p} is invariant under affine transformations of $\R^n_+$. In precise terms, denote by $\gl_{n,+}$ the set of matrices of the form
\begin{equation}
\label{matrixform}
\left(
\begin{array}{cccc}
\lambda & 0 & \cdots & 0\\
0\\
\vdots & & B\\
0
\end{array}
\right)
\end{equation}
where $\lambda > 0$ and $B \in \gl_{n-1}$. Then, the inequality \eqref{main_affine_trace_ineq-p} is $\gl_{n,+}$ invariant. In particular, it does not depend on the
Euclidean structure of $\R_+^n$.

The paper is organized as follows. In section \ref{sec:not} we fix some notations to be used in the paper and present the $\Lp$ Petty-Projection inequality.
In section \ref{sec:lem} we prove two central tools in our method (Lemmas 1 and 2). These two lemmas and the $\Lp$ Petty-Projection inequality are just used in the proof of Theorem \ref{mainthm-p} which is done in section \ref{sec:proof}. Finally, in section \ref{sec:proof} we provide additional comments on the essence of our method and on the characterization of extremal functions stated in Theorem \ref{mainthm}. For convenience of the reader we include an appendix section devoted to more technical computations of a key constant ($d_1$) in the proof of Theorem \ref{mainthm-p}.

\section{Preliminaries on convex bodies}
\label{sec:not}

We recall that a convex body $K\subset\R^n$ is a convex compact subset of $\R^n$ with non-empty interior.

For $K\subset\mathbb R^n$ as before, its support function $h_K$ is defined as $$h_K(y)=\max\{\langle y, z\rangle\ :\ z\in K\}.$$ The support function, which describes the (signed) distances of supporting hyperplanes to the origin, uniquely characterizes $K$. We also have the gauge $\|\cdot\|_K$ and radial $r_K(\cdot)$ functions of $K$ defined respectively as
\[\|y\|_K:=\inf\{\lambda>0\ :\  y\in \lambda K\},\quad y\in\R^n\setminus\{0\},\] \[r_K(y):=\max\{\lambda>0\ :\ \lambda y\in K\},\quad y\in\R^n\setminus\{0\}.\]
Clearly, $\|y\|_K=\frac{1}{r_K(y)}$. We also recall that $\|\cdot\|_K$ it is actually a norm when the convex body $K$ is symmetric (i.e. $K=-K$). On the other hand, any centrally symmetric convex body $K$ is the unit ball for some norm in $\R^n$.

For $K\subset \R^n$ we define its polar body, denoted by $K^\circ$, by
\[K^\circ:=\{y\in\R^n\ :\ \langle y,z \rangle\leq 1\quad \forall z\in K\}.\]
Evidently, $h_K=r_{K^{\circ}}$. It is also easy to see that $(\lambda K)^\circ=\frac{1}{\lambda}K^\circ$ for $\lambda>0$. A simple computation using polar coordinates shows that
\[\vol(K)=\frac{1}{n}\int_{\s^{n-1}}r_K^n(y)dy=\frac{1}{n}\int_{\s^{n-1}}\|y\|^{-n}_K dy.\]

For a given convex body $K\subset\R^n$ we find in the literature many associated bodies to it, in particular Lutwak and Zhang introduced \cite{L-Z} for a body $K$ its $\Lp$-centroid body denoted by $\Gamma_pK$. This body is defined by

\[h_{\Gamma_pK}^p(y):=\frac{1}{a_{n,p}\vol(K)}\int_{K}|\langle y,z\rangle|^p dz\quad \mbox{ for }y\in\R^n,\]
where

\[ a_{n,p} = \frac{\omega_{n+p}}{\omega_2 \omega_n \omega_{p-1}}.\]

There are some other normalizations of the $\Lp$-centroid body in the literature, the previous one is made so that $\Gamma_p \bo^n=\bo^n$ for the unit ball in $\R^n$ centered at the origin.

Inequalities (usually affine invariant) that compare the volume of a convex body $K$ and that of an associated body are common in the literature. For the specific case of $K$ and $\Gamma_pK$, Lutwak, Yang and Zhang \cite{L-Y-Z} (see also \cite{C-G} for an alternative proof) came up with what it is known as the $\Lp$ Busemann-Petty centroid inequality, namely
\begin{equation}
\label{eqn:BPI}
\vol(\Gamma_pK)\geq \vol(K).
\end{equation}
This inequality is sharp if and only if $K$ is a $0$-symmetric ellipsoid. For a comprehensive survey on $\Lp$ Brunn-Minkowski theory and other topics within Convex Geometry we refer to \cite{Sch} and references therein.

\section{Fundamental lemmas}
\label{sec:lem}

Let $C:\R^n\rightarrow \R_+$ be an even convex function such that $C(x) > 0$ for $x \neq 0$. Assume also that $C$ is $q$-homogeneous with $q>1$, that is

\[ C(\lambda y)=\lambda^qC(y),\quad \forall \lambda\geq 0,\quad y\in\R^n.\]
Denote by $C^*$ its Legendre transform defined by

\[C^*(y) = \sup_{z \in \R^n} \{ \langle y, z \rangle - C(z) \}.\]
One knows that $C^*$ is also even, convex, $p$-homogeneous and positive for $x \neq 0$, where $\frac{1}{p}+\frac{1}{q}=1$. Equivalently, these assumptions on $C$ can be resumed by saying simply that $C$ is a power $q$ of an arbitrary norm on $\R^n$.

Let $C$ be a function as before. Denote by
\[K_C = \{y \in \R^n\ :\ C(y) \leq 1 \}. \]
It is easy to see that $K_C$ is a centrally symmetric convex body with non-empty interior in $\R^n$. Moreover, it is defined by the norm $\|y\|_{K_C}=C(y)^{\frac{1}{q}}$.

The starting point as one of the essential ingredients in our work is the following theorem of \cite{Na}.

\begin{theorem}
\label{nazaret}
Let $1 < p < n$ and $q > 1$ such that $\frac{1}{p} + \frac{1}{q} = 1$. Assume $C$ is even, convex, $q$-homogeneous and positive for $x \neq 0$. Then, for any smooth function $f$ with compact support on $\R^n$, we have
\begin{equation}
\label{trazaConNormaAbstracta}
\left (\int_{\partial \R^n_+}|f(0,x)|^{\frac{p(n-1)}{n-p}}\; dx\right
)^{\frac{n-p}{n-1}} \leq \mathcal{K}_{n,C} \int_{\R^n_+} C^*(\gradient f) dx dt.
\end{equation}
Moreover, equality holds in \eqref{trazaConNormaAbstracta} if

\begin{equation}
\label{extremalsC}
f(t,x) = \gamma \left( \frac {1}{ C(t+ \delta, x-x_0)} \right) ^ {\frac {n-p}{p} }
\end{equation}
for some real constants $\gamma$ and $\delta$ with $\delta > 0$ and a point $x_0 \in \R^{n-1}$.
\end{theorem}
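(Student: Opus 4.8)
The plan is to run the mass transportation argument of Cordero--Erausquin, Nazaret and Villani in the form tailored to the half-space by Nazaret \cite{Na}. First the reductions: since $C^*$ is even, $C^*(\gradient|f|)=C^*(\gradient f)$ a.e.\ and the left side of \eqref{trazaConNormaAbstracta} is insensitive to $f\mapsto|f|$, so we may take $f\ge 0$, smooth and compactly supported. One records that $p<n$ is exactly the condition making the relevant power of the extremal $h$ from \eqref{extremalsC} integrable over $\R^n_+$ (as $h$ has only polynomial decay), so that the reference measure below has finite mass; the two invariances $f\mapsto\gamma f$ ($\gamma>0$) and $f(t,x)\mapsto f(\sigma t,\sigma x)$ ($\sigma>0$) are kept in reserve for the final comparison with $h$.

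Next, form the probability density $F\propto f^{p^*}$ on $\R^n_+$, where $p^*=\frac{np}{n-p}$ --- chosen so that $p^*\frac{n-1}{n}=\frac{p(n-1)}{n-p}$ is the trace exponent and $\bigl(p^*\frac{n-1}{n}-1\bigr)q=p^*$ for $\frac1p+\frac1q=1$ --- together with the matching density $G\propto h^{p^*}$, and take a transport map $T$ from $F\,dy$ to $G\,dz$ respecting the half-space (a Brenier map, or conveniently a Knothe/Rosenblatt map, whose first component is a function of $t$ alone and thus behaves well at $\partial\R^n_+$); since both measures are carried by $\overline{\R^n_+}$, $T$ sends $\R^n_+$ into $\overline{\R^n_+}$. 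One has the change-of-variables/Monge--Amp\`ere identity $F=G(T)\det DT$ a.e., and, by the arithmetic--geometric mean inequality applied to the (real, nonnegative) eigenvalues of $DT$, the pointwise bound $(\det DT)^{1/n}\le\frac1n\operatorname{div}T$; hence
\[
\int_{\R^n_+}G^{1-\frac1n}\,dz=\int_{\R^n_+}F^{1-\frac1n}(\det DT)^{\frac1n}\,dy\le\frac1n\int_{\R^n_+}F^{1-\frac1n}\operatorname{div}T\,dy .
\]
An integration by parts on $\R^n_+$ moves one derivative onto $F^{1-1/n}$ and produces a term over $\partial\R^n_+$; with $F\propto f^{p^*}$ and the exponent identities above, this boundary term supplies, up to a constant, the trace functional $\int_{\partial\R^n_+}|f(0,x)|^{\frac{p(n-1)}{n-p}}\,dx$ on the left --- this is the mechanism by which the trace enters --- while the bulk remainder takes, up to constants, the form $\int_{\R^n_+}f^{\,p_*-1}\langle-\gradient f,T\rangle\,dy$ with $p_*=\frac{p(n-1)}{n-p}$.

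To finish, bound the pairing by the duality between the gauge $\|\cdot\|_{K_C}$ and its polar, $\langle-\gradient f,T\rangle\le\bigl(pC^*(\gradient f)\bigr)^{1/p}\bigl(qC(T)\bigr)^{1/q}$, and apply H\"older with exponents $p,q$: since $(p_*-1)q=p^*$, the first factor collapses to $\bigl(\int_{\R^n_+}C^*(\gradient f)\bigr)^{1/p}$ while the second, pushed forward through $T$, is a finite explicit constant depending only on $n,p,C$ (finiteness again uses $p<n$). Rearranging and using the reserved normalizations to compare with $h$ yields \eqref{trazaConNormaAbstracta} with the value $\mathcal K_{n,C}$ determined by $h$. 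The equality case is read off by tracing the equalities backwards: equality in the arithmetic--geometric mean step forces $DT$ proportional to the identity, hence $T$ a dilation--translation; equality in the duality step forces $\gradient f(y)$ proportional, at a.e.\ $y$, to the $C$-dual of $T(y)$; equality in H\"older forces the residual proportionality. Together with the rigid form of the reference measure built from $h$, these force $f$, up to the admissible transformations, to be the function in \eqref{extremalsC}, with $\delta>0$ arising from the dilation part of $T$ and $x_0$ from translation invariance in the $x$-variables.

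The step I expect to be the real obstacle is the integration by parts on $\R^n_+$: making it rigorous, and above all arranging the reference densities so that the resulting boundary term is \emph{precisely} the trace functional --- rather than vanishing, or degrading to a merely tangential quantity --- requires control of the regularity of the transport potential up to $\partial\R^n_+$ and of the behaviour at infinity (the extremal $h$ has heavy polynomial tails). Both are handled by an approximation scheme (regularizing the densities, truncating $h$) as in \cite{Na}.
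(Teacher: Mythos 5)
A preliminary remark on scope: the paper itself offers no proof of Theorem \ref{nazaret} --- it is quoted from Nazaret \cite{Na} --- so your proposal has to be measured against the mass-transport argument of that reference, which is indeed the strategy you are sketching.

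There is, however, a genuine gap at the very point where the trace is supposed to appear. After the arithmetic--geometric mean step and the integration by parts on $\R^n_+=\{t>0\}$ (whose outward normal is $-e_1$), the transport argument yields
\[
n\int_{\R^n_+}G^{1-\frac1n}\,dz\;+\;\int_{\partial\R^n_+}F^{1-\frac1n}(0,x)\,T_1(0,x)\,dx\;\le\;-\int_{\R^n_+}\bigl\langle\nabla\bigl(F^{1-\frac1n}\bigr),T\bigr\rangle\,dy,
\]
so the boundary term dominates (a constant times) the trace functional only if the first component $T_1$ of the transport map is bounded below by a positive constant on $\partial\R^n_+$. With your choice of target, $G\propto h^{p^*}$ where $h$ is the extremal \eqref{extremalsC}, the support of $G$ is all of $\overline{\R^n_+}$, and the only information you invoke (``$T$ sends $\R^n_+$ into $\overline{\R^n_+}$'') gives merely $T_1\ge0$, which yields nothing; the same is true for a Knothe map. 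Worse, if you run your scheme on $f=h$ itself, the two normalized densities coincide, the optimal map is the identity, $T_1\equiv0$ on $\partial\R^n_+$, and the boundary term vanishes identically --- so as written the argument cannot produce the trace term at all, let alone with the sharp constant. The actual mechanism is to transport onto the extremal density translated into the interior, e.g. $G\propto C(z)^{-n}\mathbf 1_{\{z_1\ge 1\}}$ (note that $h^{p^*}(y)=C(y+e_1)^{-n}$), so that $T_1\ge1$ pointwise and in particular on the boundary; then the boundary term controls the trace, and for the translated extremal $f(y)=C(y+e_1)^{-(n-p)/p}$ the map is the unit translation and every step becomes an equality --- this is precisely where the parameter $\delta>0$ in \eqref{extremalsC} comes from. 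The leftover additive constant $n\int G^{1-1/n}$ is then absorbed using the dilation and multiplication normalizations you ``kept in reserve''; that part of your plan, the exponent identities, and the duality/H\"older step are fine. By contrast, the step you single out as the main obstacle (regularity and decay in the integration by parts) is a real but standard approximation issue, not the missing idea.

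A secondary point: the theorem asserts only that the functions \eqref{extremalsC} \emph{give} equality, which follows from a direct computation (essentially the one carried out in Lemma \ref{constant_L}). Your plan to characterize all extremals by ``tracing the equalities backwards'' claims more than is stated and is not available at this level of rigor; as the paper itself notes in Section \ref{sec:end}, the characterization of extremals for general $C$ and $1<p<n$ was left open in \cite{Na}.
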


The proof of Theorem \ref{mainthm-p} is based on two lemmas and uses Theorem \ref{nazaret} in a crucial manner with the appropriate choice of $C$ for each $f$, denoted by $C_f$, as stated in Definition \ref{Cfdefinition} below.

Before introducing $C_f$, we state the first key tool.

\begin{lemma}
\label{constant_L}
Let $p$, $q$ and $C$ be as in Theorem \ref{nazaret}. The optimal constant $\mathcal{K}_{n,C}$ is given by
\[\mathcal{K}_{n,C}
 = p^p (n-p)^{-\frac{p}{q}} (q(n-1))^{-\frac p {q(n-1)}} \left(\int_{(K_{C})_+} y_1^{q n - q - n} dy \right) ^{- \frac p {q(n-1) } },
 \]
where $(K_{C})_+ = K_C \cap \R^n_+$.

\end{lemma}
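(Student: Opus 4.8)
The plan is to compute the optimal constant $\mathcal K_{n,C}$ explicitly by evaluating both sides of the sharp trace inequality \eqref{trazaConNormaAbstracta} on the extremal function \eqref{extremalsC}. Since Theorem \ref{nazaret} guarantees that equality is attained there, the ratio of the left-hand side to the right-hand side on such an $f$ is exactly $\mathcal K_{n,C}$. By the affine/translation freedom in $\gamma$, $\delta$ and $x_0$ — and because multiplying $f$ by a constant or translating in $x$ changes both sides homogeneously — we may simplify to $\gamma = 1$, $\delta = 1$, $x_0 = 0$, so that $f(t,x) = C(t+1,x)^{-\frac{n-p}{p}}$ on $\R^n_+$. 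I would then reduce everything to an integral of a power of $C$ over the half-space, and finally rewrite that half-space integral in terms of $\int_{(K_C)_+} y_1^{qn-q-n}\,dy$ using the homogeneity of $C$ together with polar-type coordinates adapted to the body $K_C$.

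Concretely, the main steps are as follows. First I would compute the right-hand side $\int_{\R^n_+} C^*(\nabla f)\,dx\,dt$. Writing $f = C(y+e_1)^{-\frac{n-p}{p}}$ with $y=(t,x)$ and $e_1=(1,0,\dots,0)$, the chain rule gives $\nabla f = -\frac{n-p}{p}\,C(y+e_1)^{-\frac{n-p}{p}-1}\nabla C(y+e_1)$; since $C$ is $q$-homogeneous, Euler's identity $\langle \nabla C(z), z\rangle = qC(z)$ and the $p$-homogeneity of $C^*$ convert $C^*(\nabla f)$ into a constant times $C(y+e_1)^{-\frac{(n-p)}{p}\cdot\frac{q}{q-1}}\cdot(\text{something})$; more cleanly, one uses the identity $C^*(\nabla C(z)) = (q-1)C(z)$ (valid for $q$-homogeneous $C$) to reduce the integrand to a pure power of $C(y+e_1)$. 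Second, a translation $y \mapsto y - e_1$ (which moves the half-space to $\{t > -1\}$, but by a further scaling argument, or by direct evaluation and comparison with the LHS, one checks the constant is independent of the shift) and passing to the cone coordinate $z = y/\|y\|_{K_C}$, $\rho = \|y\|_{K_C}$ reduces $\int C(y)^{-\alpha}\,dy$ over a half-space to $\big(\int_0^\infty \rho^{n-1-q\alpha}\,d\rho\big)$ times a surface integral over $\partial K_C \cap \R^n_+$, which after converting the surface measure back to a solid integral becomes a multiple of $\int_{(K_C)_+} y_1^{qn-q-n}\,dy$. Third, I would compute the left-hand side $\big(\int_{\R^{n-1}} C(1,x)^{-\frac{(n-p)(n-1)}{n-p}}\,dx\big)^{\frac{n-p}{n-1}} = \big(\int_{\R^{n-1}} C(1,x)^{-(n-1)}\,dx\big)^{\frac{n-p}{n-1}}$ and likewise express it through the geometry of the slice $K_C \cap \{t=1\}$; the key point is that both the boundary slice integral and the solid half-space integral can be written in terms of the same one-dimensional "profile" of $K_C$ in the $t$-direction, so they combine into the single quantity $\int_{(K_C)_+} y_1^{qn-q-n}\,dy$ after the powers are matched. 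Finally, assembling the numerical prefactors — the powers of $p$, $n-p$, $q$ and $n-1$ coming from the exponents in \eqref{extremalsC} and from the one-dimensional radial integrals — yields the stated formula.

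I expect the main obstacle to be the bookkeeping of exponents and the precise identification of $\int_{(K_C)_+} y_1^{qn-q-n}\,dy$ as the common geometric quantity controlling both sides. In particular, care is needed in (i) justifying that the constant extracted from the extremal does not depend on $\delta$ and $x_0$ — this follows because \eqref{trazaConNormaAbstracta} is scaling- and $x$-translation-homogeneous of matching degrees on both sides, so the ratio is genuinely $\delta$- and $x_0$-independent, allowing the normalization $\delta=1$, $x_0=0$; and (ii) correctly handling the half-space restriction, since $(K_C)_+$ rather than all of $K_C$ appears — this is exactly what records the one-sided nature of the trace problem. The one-dimensional integrals $\int_0^\infty(1+s)^{-\beta}\,ds$-type computations, and the conversion between the surface measure on $\partial K_C$ and the solid measure weighted by $y_1$, are routine once the coordinates are set up, so I would not belabor them. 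A cross-check: specializing $C(y) = \frac1q|y|^q$ (so $K_C$ is a ball, up to scaling) must reproduce the classical constant $\mathcal K_{n,p}$ from \eqref{E.2}, which pins down all normalizations.
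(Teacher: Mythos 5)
Your proposal follows essentially the same route as the paper: evaluate both sides of \eqref{trazaConNormaAbstracta} on the extremal \eqref{extremalsC} with $\gamma=\delta=1$, $x_0=0$, use the $q$-homogeneity of $C$ (in the form $C^*(\nabla C(z))=(q-1)C(z)$) to reduce the integrands to pure powers of $C$, pass to polar-type coordinates, and repackage the angular integral as $\int_{(K_C)_+}y_1^{qn-q-n}\,dy$. The main difference is cosmetic: you propose coordinates adapted to $K_C$ (cone measure on $\partial K_C$), while the paper uses Euclidean spherical coordinates $y=r\theta-e_1$ with $\theta\in\s^{n-1}_+$.

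There is, however, a genuine error in your second step as written. You claim that after the shift, ``$\int C(y)^{-\alpha}\,dy$ over a half-space reduces to $\big(\int_0^\infty\rho^{n-1-q\alpha}\,d\rho\big)$ times a surface integral over $\partial K_C\cap\R^n_+$.'' This factorization cannot be right: $\int_0^\infty\rho^{n-1-q\alpha}\,d\rho$ diverges for every $\alpha$, and since a half-space through the origin is a cone, $\int_{\R^n_+}C(y)^{-\alpha}\,dy$ itself is divergent. The shift is not removable by scaling (contrary to your suggestion): $\{t>1\}$ and $\{t>0\}$ are not dilates of each other (also note the shifted domain is $\{t>1\}$, not $\{t>-1\}$). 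The correct mechanism is that the shifted domain $\{z_1>1\}$ forces a \emph{direction-dependent} lower limit on the radial variable, $\rho>1/u_1$ for $z=\rho u$, $u\in\partial K_C$; carrying out $\int_{1/u_1}^\infty\rho^{n-1-q\alpha}\,d\rho=\frac{1}{q\alpha-n}\,u_1^{q\alpha-n}$ (with $\alpha=n-1$) is precisely what produces the weight $u_1^{qn-q-n}$, makes the integral finite, and upon passing from the weighted surface integral to a body integral yields $\int_{(K_C)_+}y_1^{qn-q-n}\,dy$. The paper's parametrization $r\geq\theta_1^{-1}$ does exactly this; once you correct the radial bounds, the rest of your plan goes through.
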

\begin{proof}
We calculate both left and right-hand sides of \eqref{trazaConNormaAbstracta} with $f$ given by \eqref{extremalsC}, $\gamma = \delta=1$ and $x_0 = 0$. Indeed, for

\[w_0(y) = C(y+e_1)^{-\frac {n-p}{q(p-1)} },\]
we have

\[\gradient w_0(y) = -\frac{n-p}{q(p-1)} C(y+e_1)^{-\frac {n-p}{q(p-1)}-1 } \gradient C(y+e_1). \]
On the right-hand side, we have

\begin{align*} C^*(\gradient w_0(y))
 = & \left|-\frac{n-p}{q(p-1)}\right|^pC(y+e_1)^{-\frac {p(n-p)}{q(p-1)}-p } C^*(\gradient C(y+e_1))\\
 = & (q-1)\left(\frac{n-p}{q(p-1)}\right)^p C(y+e_1)^{1-n}
\end{align*}
and on the other hand

\[
|w_0(y)|^{\frac{(n-1)p}{n-p}} = C(y+e_1)^{-\frac {n-p}{q(p-1)} \frac{(n-1)p}{n-p} } = C(y+e_1)^{1-n}.
\]

Let us consider the parametrization of $\R^n_+$ given by
\[
\left\{
\begin{array}{cc}
y = r \theta - e_1 \\
\theta \in \s^{n-1}_+, r \geq \theta_1^{-1}\\
\end{array}
\right.
\]
with Jacobian $r^{n-1}$.
Then,
\begin{align*}
\int_{\R^n_+} C^*(\gradient w_0(y)) dy &= (q-1)\left(\frac{n-p}{q(p-1)}\right)^p \int_{\s^{n-1}_+} \int_{ \theta_1^{-1}}^\infty r^{n-1} C(r \theta)^{1-n} dr d \theta \\
 &= (q-1)\left(\frac{n-p}{q(p-1)}\right)^p \int_{\s^{n-1}_+} \int_{ \theta_1^{-1}}^\infty r^{-q(n-1)+n-1} C(\theta)^{1-n} dr d \theta \\
 &= \frac{q-1}{q(n-1)-n}\left(\frac{n-p}{q(p-1)}\right)^p \int_{\s^{n-1}_+} \theta_1^{q(n-1)-n} C(\theta)^{1-n} d \theta.
\end{align*}

For the left-hand side of \eqref{trazaConNormaAbstracta}, we consider the parametrization of $\partial \R^n_+$,
\[
\left\{
\begin{array}{cc}
y = \theta_1^{-1} \theta  - e_1\\
\theta \in \s^{n-1}_+
\end{array}
\right.
\]
with Jacobian $\theta_1^{-n}$. Then,
\begin{align*}
\int_{\partial \R^n_+ } |w_0(y)|^{\frac{(n-1)p}{n-p}} dy
&= \int_{\s^{n-1}_+} \theta_1^{-n} C(\theta_1^{-1} \theta)^{1-n} d \theta \\
&= \int_{\s^{n-1}_+} \theta_1^{-n+q(n-1)} C(\theta)^{1-n} d \theta.
\end{align*}

Thus we have
\begin{align*}
\mathcal{K}_{n,C} &= \left(\frac{q-1}{q(n-1)-n}\left(\frac{n-p}{q(p-1)}\right)^p\right)^{-1} \left(\int_{\s^{n-1}_+} \theta_1^{-n+q(n-1)} C(\theta)^{1-n} d \theta\right) ^{-\frac{p}{q(n-1)}} \\
 &= p^p (n-p)^{-\frac{p}{q}} \left(\int_{\s^{n-1}_+} \theta_1^{-n+q(n-1)} C(\theta)^{1-n} d \theta\right) ^{-\frac{p}{q(n-1)}}
\end{align*}

Let us note now that
\begin{align*}
\int_{(K_{C})_+} y_1^{q n - q - n} dy &= \int_{\s^{n-1}_+} \int_0^{r_K(\theta)} r^{n-1} (r \theta_1)^{q n - q - n} d r d \theta\\
 &= \int_{\s^{n-1}_+} \theta_1^{q n - q - n}  \int_0^{r_K(\theta)} r^{q n - q - 1} dr d \theta\\
 &= \frac 1 {q(n-1)} \int_{\s^{n-1}_+} \theta_1^{q n - q - n} r_K(\theta)^{q n - q} d \theta = \frac 1 {q(n-1)} \int_{\s^{n-1}_+} \theta_1^{q n - q - n} C(\theta)^{1-n} d \theta.
\end{align*}

So we obtain that
\begin{align*}
\mathcal{K}_{n,C}
=& p^p (n-p)^{-\frac{p}{q}} \left( {q(n-1)} \int_{(K_{C})_+} y_1^{q n - q - n} dy \right) ^{-\frac{p}{q(n-1)}}\\
=& p^p (n-p)^{-\frac{p}{q}} (q(n-1))^{-\frac p {q(n-1)}} \left(\int_{(K_{C})_+} y_1^{q n - q - n} dy \right) ^{- \frac p {q(n-1) } }. \\
\end{align*}
\end{proof}

Throughout the remainder of paper we think of $\R^{n-1}$ as a subset  $\{0\} \times \R^{n-1}\subset \R^n$.

For each smooth function $f$ with compact support on $\R^n$, consider

\[L_f=\{\xi\in\R^{n-1}\ :\ \|\gradient_\xi f\|_\pmas\leq 1\},\]
which is a convex body in $\R^{n-1}$ defined by the norm $\|\xi\|_{L_f} = \|\gradient_\xi f\|_\pmas$.

For convenience, we set
\begin{equation}\label{eqn:Zf}
Z_p(f)=\left(\int_{\s^{n-2}}\|\gradient_\xi f\|_\pmas^{1-n}d\xi\right)^{\frac{1}{1-n}}
\end{equation}
and notice we have the identities

\begin{equation}
\label{eqn:VolZ}
(n-1) \vol(L_f) = Z_p(f)^{1-n}
\end{equation}
and
\begin{equation}
\label{eqn:Ez}
\E_p(f) = c_{n-1,p} Z_p(f).
\end{equation}

We now are ready to introduce the definition of the function $C_f$.

\begin{definition}
\label{Cfdefinition}
Let $f$ be a smooth function with compact support on $\R^n$. For $p > 1$ and $(t, x) \in \R^n_+$, we set

\[C_f^*(t, x) := \frac{\alpha_f}{p} |t|^p + \int_{\s^{n-2}} \|\gradient_\xi f\|_\pmas^{1-n-p} |\langle x, \xi \rangle |^p d \xi,\]
where $\alpha_f = \frac{p}{p-1}  \left\| \dt f \right\|_\pmas^{-p} Z_p(f)^{1-n}$. The function $C_f$ is defined as the Legendre transform of $C_f^*$.
\end{definition}
The specific choice of the constant $\alpha_f$ will be clarified at the last section.

\begin{prop}
\label{prop:Cdef}
Let $p$ and $f$ be as in Definition \ref{Cfdefinition}. The function $C_f^*$ is well defined, even, convex, $p$-homogeneous and $C_f^*(x) > 0$ for $x \neq 0$.
Thus its Legendre transform $C_f$ satisfies the hypotheses of Theorem \ref{nazaret}.
\end{prop}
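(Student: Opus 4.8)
The plan is to verify the four structural properties of $C_f^*$ straight from its explicit formula and then to transfer them to $C_f=(C_f^*)^*$ via the Legendre duality recalled at the start of Section~\ref{sec:lem}. Throughout, write $w(\xi):=\|\gradient_\xi f\|_\pmas^{1-n-p}$ for the weight appearing in the definition, and note that one may assume $f\not\equiv 0$ on $\R^n_+$ and $\dt f\not\equiv 0$ on $\R^n_+$, since otherwise the inequality \eqref{main_affine_trace_ineq-p} is trivial and there is nothing to prove.

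First I would settle \emph{well-definedness}, which is the only point that is not purely formal: because $1-n-p<0$, one must keep $\|\gradient_\xi f\|_\pmas$ away from zero. I would observe that $\xi\mapsto\|\gradient_\xi f\|_\pmas=\|\tilde{\gradient}f\cdot\xi\|_{L^\pmas(\R^n_+)}$ is a \emph{norm} on $\R^{n-1}$: positive homogeneity is immediate, the triangle inequality is Minkowski's inequality in $L^\pmas(\R^n_+)$, and definiteness holds because $\|\gradient_\xi f\|_\pmas=0$ would force $\gradient_\xi f\equiv 0$ on $\R^n_+$, i.e. $f$ constant along every line of direction $\xi$ inside $\R^n_+$, hence $f\equiv 0$ on $\R^n_+$ by compactness of support, contrary to our assumption. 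Being a norm, $\|\gradient_\xi f\|_\pmas$ is continuous and attains a strictly positive minimum on the compact sphere $\s^{n-2}$, so $w$ is a positive bounded continuous function there; consequently $\int_{\s^{n-2}}w(\xi)|\langle x,\xi\rangle|^p\,d\xi$ is finite for every $x$, $L_f$ is a genuine centrally symmetric convex body, and by \eqref{eqn:VolZ} the quantities $Z_p(f)$ and $\alpha_f=\frac{p}{p-1}\|\dt f\|_\pmas^{-p}Z_p(f)^{1-n}$ are finite and strictly positive.

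Next I would record \emph{evenness}, \emph{$p$-homogeneity} and \emph{convexity}, all inherited term by term. The function $t\mapsto|t|^p$ is even, $p$-homogeneous and, since $p>1$, convex, whence so is $(t,x)\mapsto\frac{\alpha_f}{p}|t|^p$ on $\R^n$; and for each $\xi$ the function $x\mapsto|\langle x,\xi\rangle|^p$ is the composition of a linear functional with the even convex $p$-homogeneous map $|\cdot|^p$, hence even, $p$-homogeneous and convex. Integration against the nonnegative measure $w(\xi)\,d\xi$ is a ``positive combination'' and therefore preserves all three properties, so $C_f^*$, the sum of the two pieces, is even, $p$-homogeneous and convex on $\R^n$. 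For \emph{positive-definiteness} I would argue on $(t,x)\neq(0,0)$ by cases: if $t\neq 0$ the first term alone is strictly positive while the integral is nonnegative; if $t=0$ and $x\neq 0$, then $\langle x,\xi\rangle\neq 0$ off the great subsphere $x^{\perp}\cap\s^{n-2}$, a null set on $\s^{n-2}$, and $w>0$, so the integral is strictly positive. Hence $(C_f^*)^{1/p}$ is a norm on $\R^n$; in particular $C_f^*$ is finite-valued convex, hence closed, and the duality recalled in Section~\ref{sec:lem} yields that $C_f=(C_f^*)^*$ is even, convex, $q$-homogeneous (with $\tfrac1p+\tfrac1q=1$) and positive off the origin, i.e. a $q$-th power of a norm, so $C_f$ satisfies exactly the hypotheses imposed on $C$ in Theorem~\ref{nazaret}; moreover $(C_f)^*=C_f^*$ by involutivity of the Legendre transform, so that applying Theorem~\ref{nazaret} with $C=C_f$ reproduces $C_f^*(\gradient f)$ on the right-hand side. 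The only mildly delicate ingredient is the uniform lower bound on $\|\gradient_\xi f\|_\pmas$ used for well-definedness; the remaining verifications are routine inheritances of convex-analytic properties, so I expect the author's proof to be short.
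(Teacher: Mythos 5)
Your proof is correct and supplies exactly what the proposition requires. Note, however, that the paper's own ``proof'' of Proposition~\ref{prop:Cdef} is the single sentence ``The proof follows the same lines as in Proposition~1 in \cite{HJM}'' with no details given, so there is no explicit argument in this paper to compare against. Your verification is the natural one and very likely mirrors what \cite{HJM} does: the only non-routine point is well-definedness, and you handle it correctly by observing that $\xi\mapsto\|\gradient_\xi f\|_\pmas$ is a norm on $\R^{n-1}$ (vanishing would force $f\equiv 0$ on $\R^n_+$ by compact support), hence continuous and bounded away from zero on $\s^{n-2}$, so the weight $\|\gradient_\xi f\|_\pmas^{1-n-p}$ is bounded; the term-by-term inheritance of evenness, $p$-homogeneity and convexity under integration against a nonnegative measure is routine; strict positivity follows from the case split on $t$ together with the fact that $\{\xi\in\s^{n-2}:\langle x,\xi\rangle=0\}$ is a null set for $x\neq 0$; and the transfer to $C_f$ via Legendre duality is exactly the duality already recalled in Section~\ref{sec:lem}. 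One could quibble that the paper's Definition~\ref{Cfdefinition} does not itself restrict to $f\not\equiv 0$ on $\R^n_+$, so your preliminary reduction ``one may assume $f\not\equiv 0$ and $\dt f\not\equiv 0$'' is, strictly speaking, an added hypothesis; but it is the obviously intended one (indeed $\alpha_f$ is undefined otherwise), and the same caveat applies to the paper.
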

\begin{proof}
The proof follows the same lines as in Proposition $1$ in \cite{HJM}.
\end{proof}

In order to simplify notation, for each $f$ as before, we denote $K_{C_f}$ by $K_f$, $(K_{C_f})_+$ by $(K_f)_+$, $\|\cdot\|_{K_{C_f}}$ by $\|\cdot\|_{K_f}$ and

\[
D_f^*(x) := \int_{\s^{n-2}} \|\gradient_\xi f\|_\pmas^{1-n-p} |\langle x, \xi \rangle |^p d \xi .
\]

It is easy to see that

\begin{equation} \label{productLegendre}
C_f(t,x) = \frac {\alpha_f^{1-q}}{q} |t|^q + D_f(x) ,
\end{equation}
where $D_f$ is the Legendre transform of $D_f^*$ and $q$ satisfies $\frac{1}{p} + \frac{1}{q} = 1$.

Let $K_{f,t} = \{x\in \R^{n-1}:\ (t,x) \in K_f\}$.

Using (\ref{productLegendre}) we see that $K_{f,0} = \{x \in \R^{n-1}:\ D_f(x) \leq 1\}$ and
\begin{equation}
\label{K0Size}
K_{f,t} = \left\{x \in \R^{n-1}:\ D_f(x) \leq 1- \frac{\alpha_f^{1-q}}{q} |t|^q \right\} = \left(1-\frac{\alpha_f^{1-q}}{q} |t|^q\right)^{\frac{1}{q}} K_{f,0}.
\end{equation}

The second central tool is stated as

\begin{lemma}
\label{GammaCentroid}
Let $p$ and $f$ be as in Definition \ref{Cfdefinition}. The relation between $K_{f,0}$ and $L_f$ is
\[K_{f,0} = \left((n+p-1) a_{n-1,p} \vol(L_f)\right)^{\frac{1}{p}} q^{\frac 1q}p^{\frac 1p} \Gamma_p L_f\]
 and as a consequence,
 \begin{equation}\label{eqn:VolKo}
vol(K_{f,0}) = \left(p q^{\frac pq } (n+p-1) a_{n-1,p}\right)^{\frac{n-1}{p}} \vol(L_f)^{\frac{n-1}p} \vol(\Gamma_p L_f),
 \end{equation}
where $\frac{1}{p} + \frac{1}{q} =1$.
\end{lemma}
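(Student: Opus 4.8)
The plan is to compute $D_f^*$ explicitly as a support-function-type expression and recognize it as (a multiple of) the $p$-th power of the support function of a dilate of the $L_p$-centroid body $\Gamma_p L_f$. First I would recall that by definition
\[
D_f^*(x) = \int_{\s^{n-2}} \|\gradient_\xi f\|_p^{1-n-p}\, |\langle x,\xi\rangle|^p\, d\xi
= \int_{\s^{n-2}} \|\xi\|_{L_f}^{1-n-p}\, |\langle x,\xi\rangle|^p\, d\xi,
\]
using that $\|\xi\|_{L_f}=\|\gradient_\xi f\|_p$. The next step is to convert this spherical integral over $\s^{n-2}$ into an integral over the body $L_f\subset\R^{n-1}$ via polar coordinates: writing $z = r\xi$ with $\xi\in\s^{n-2}$ and $r\in(0, r_{L_f}(\xi))$, one has $dz = r^{n-2}\,dr\,d\xi$ and $r_{L_f}(\xi) = \|\xi\|_{L_f}^{-1}$, so
\[
\int_{L_f} |\langle x,z\rangle|^p\, dz
= \int_{\s^{n-2}} |\langle x,\xi\rangle|^p \int_0^{\|\xi\|_{L_f}^{-1}} r^{p+n-2}\, dr\, d\xi
= \frac{1}{n+p-1}\int_{\s^{n-2}} |\langle x,\xi\rangle|^p\, \|\xi\|_{L_f}^{-(n+p-1)}\, d\xi.
\]
Since $-(n+p-1) = 1-n-p$, this is exactly $\frac{1}{n+p-1}D_f^*(x)$. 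Therefore $D_f^*(x) = (n+p-1)\int_{L_f}|\langle x,z\rangle|^p\,dz$, and comparing with the definition of the $L_p$-centroid body,
\[
h_{\Gamma_p L_f}^p(x) = \frac{1}{a_{n-1,p}\vol(L_f)}\int_{L_f}|\langle x,z\rangle|^p\,dz,
\]
we get $D_f^*(x) = (n+p-1)\, a_{n-1,p}\,\vol(L_f)\, h_{\Gamma_p L_f}^p(x)$.

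The second step is to dualize. Since $D_f$ is the Legendre transform of the $p$-homogeneous function $D_f^*$, and $D_f^*$ is (up to the positive constant $\beta_f := (n+p-1)a_{n-1,p}\vol(L_f)$) the $p$-th power of the norm whose unit ball is $(\Gamma_p L_f)^\circ$ — because $h_{\Gamma_p L_f} = r_{(\Gamma_p L_f)^\circ}^{-1}$ is precisely the gauge of $(\Gamma_p L_f)^\circ$ — I would use the standard fact that the Legendre transform of $y\mapsto \frac{a}{p}\|y\|_E^p$ is $x\mapsto \frac{a^{1-q}}{q}\|x\|_{E^\circ}^q$ for a symmetric convex body $E$, with $\frac1p+\frac1q=1$. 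Writing $D_f^*(x) = \frac{p\beta_f}{p}\|x\|_{(\Gamma_p L_f)^\circ}^p$, its Legendre transform is $D_f(y) = \frac{(p\beta_f)^{1-q}}{q}\|y\|_{\Gamma_p L_f}^q$. Hence the unit ball $K_{f,0} = \{D_f \le 1\}$ is the dilate $\left(q (p\beta_f)^{q-1}\right)^{1/q}\Gamma_p L_f = p^{1/p} q^{1/q} \beta_f^{1/p}\,\Gamma_p L_f$ (using $(q-1)/q = 1/p$ and $q^{1/q}(p\beta_f)^{(q-1)/q} = q^{1/q} p^{1/p}\beta_f^{1/p}$). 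Substituting $\beta_f = (n+p-1)a_{n-1,p}\vol(L_f)$ gives exactly the claimed formula
\[
K_{f,0} = \left((n+p-1)a_{n-1,p}\vol(L_f)\right)^{1/p} q^{1/q} p^{1/p}\,\Gamma_p L_f.
\]

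For the volume identity, I would simply take $(n-1)$-dimensional volumes of both sides, using homogeneity of volume under dilations in $\R^{n-1}$: $\vol(\lambda\Gamma_p L_f) = \lambda^{n-1}\vol(\Gamma_p L_f)$ with $\lambda = \left((n+p-1)a_{n-1,p}\vol(L_f)\right)^{1/p}q^{1/q}p^{1/p}$. This yields
\[
\vol(K_{f,0}) = \left(p\, q^{p/q}\,(n+p-1)\,a_{n-1,p}\right)^{\frac{n-1}{p}}\vol(L_f)^{\frac{n-1}{p}}\,\vol(\Gamma_p L_f),
\]
after collecting the exponents: $\left(q^{1/q}p^{1/p}\right)^{n-1} = \left(q^{p/q}p\right)^{(n-1)/p}$ since $p\cdot\frac1p = 1$ and $p\cdot\frac1q = \frac pq$. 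I expect the main obstacle to be purely bookkeeping: keeping the conjugate-exponent juggling ($1/p+1/q=1$, $(q-1)q = \ldots$, $q^{1/q}p^{1/p}$ versus $(pq^{p/q})^{1/p}$) consistent, and making sure the normalization constant $a_{n-1,p}$ enters with the correct power. The only genuinely substantive point — the polar-coordinate rewriting of $D_f^*$ as an integral over $L_f$ and its identification with $h_{\Gamma_p L_f}^p$ — is straightforward once one notices $\|\xi\|_{L_f} = \|\gradient_\xi f\|_p$ and $1-n-p = -(n+p-1)$; the rest is the Legendre-transform computation for powers of gauge functions, which is classical. One should double-check that $L_f$ is symmetric (it is, since $\|\gradient_{-\xi}f\|_p = \|\gradient_\xi f\|_p$), so that $\|\cdot\|_{L_f}$ and $\|\cdot\|_{(\Gamma_p L_f)^\circ}$ are genuine norms and the duality formula applies.
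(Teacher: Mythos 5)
Your proposal is correct and follows essentially the same plan as the paper's proof: the first half (polar-coordinate conversion of the spherical integral to $\int_{L_f}|\langle x,z\rangle|^p\,dz$, giving $D_f^*(x)=(n+p-1)a_{n-1,p}\vol(L_f)\,h_{\Gamma_pL_f}^p(x)$) is identical, and the second half is the same duality fact in slightly different clothing. Where the paper isolates the geometric identity $K_{C^*}=q^{1/q}p^{1/p}K_C^\circ$ as a separate small lemma (proved via the $\ell_q=\max_{t\ge0}\{t^{1/q}-t\}$ trick) and then takes polars of level sets, you instead invoke the equivalent classical formula $\bigl(\tfrac{a}{p}\|\cdot\|_E^p\bigr)^*=\tfrac{a^{1-q}}{q}\|\cdot\|_{E^\circ}^q$ and carry the constants through the Legendre transform directly; the conjugate-exponent bookkeeping and the final volume computation agree with the paper.
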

\begin{proof}
Firstly, for $x \in \R^{n-1}$,

\begin{align*}
a_{n-1,p} \vol(L_f) h_{\Gamma_p L_f}^p(x) &= \int_{L_f} |\langle x, w \rangle|^p d w \\
&= \int_{\s^{n-2}} \int_0^{r_{L_f}(\xi)} r^{n-2}|\langle x, r \xi \rangle|^p dr d \xi\\
&= \int_{\s^{n-2}} |\langle x, \xi \rangle|^p \int_0^{r_{L_f}(\xi)} r^{n-2 + p}dr d\xi\\
&= \frac 1{n+p-1}\int_{\s^{n-2}} |\langle x, \xi \rangle|^p r_{L_f}(\xi)^{n-1+p} d \xi \\
&= \frac 1{n+p-1}\int_{\s^{n-2}} |\langle x, \xi \rangle|^p \| \gradient_\xi f\|_p^{1-n-p} d \xi \\
&= \frac 1{n+p-1}D_f^*(x)
\end{align*}
Thus
$$(n+p-1)a_{n-1,p} \vol(L_f) h_{\Gamma_p L_f}^p(x)=D_f^*(x),$$
so that

\begin{align*}
(\Gamma_p L_f)^\circ &= \{x \in \R^{n-1}:\ h_{\Gamma_p L_f}^p(x) \leq 1\} \\
&= ((n+p-1) a_{n-1,p} \vol(L_f))^{\frac{1}{p}}\{x \in \R^{n-1}:\ D_f^*(x) \leq 1\} \\
&= ((n+p-1) a_{n-1,p} \vol(L_f))^{\frac{1}{p}} q^{\frac 1q} p^{\frac 1p} (K_{f,0})^\circ,\\
\end{align*}
where in the last equality we used a general fact which we prove below for sake of completeness, once we were not able to find a proof in the literature. Finally, taking polar at both sides we obtain
\[
\Gamma_p L_f = ((n+p-1) a_{n-1,p} \vol(L_f))^{-\frac{1}{p}} q^{-\frac 1q} p^{-\frac 1p}  K_{f,0},\\
\]
and the proof of lemma follows.
\end{proof}

\begin{lemma}
Let $p > 1$ and $C:\R^m \to \R_+$, $m \geq 1$, be convex, $q$-homogeneous and positive for $x \neq 0$, where $\frac{1}{p}+\frac{1}{q}=1$. Then,
\[
K_{C^*} =  q^{\frac 1q} p^{\frac 1p} K_C^\circ .
\]
\end{lemma}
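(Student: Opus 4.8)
The plan is to compute the Legendre transform $C^*$ in closed form and then simply read off its unit sublevel set.

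First I would record that $K_C$ is a convex body with the origin in its interior and that its gauge function equals $\|y\|_{K_C}=C(y)^{1/q}$; equivalently $C(y)=\|y\|_{K_C}^{\,q}$, so $C$ is a $q$-th power of the gauge of $K_C$. Hence for every $z\in\R^m$, writing $y=tu$ with $t=\|y\|_{K_C}\ge 0$ and $u$ on the unit sphere of $\|\cdot\|_{K_C}$ (the case $y=0$ being harmless),
\[
C^*(z)\;=\;\sup_{y\in\R^m}\bigl(\langle z,y\rangle-\|y\|_{K_C}^{\,q}\bigr)\;=\;\sup_{\|u\|_{K_C}=1}\ \sup_{t\ge 0}\bigl(t\langle z,u\rangle-t^{\,q}\bigr).
\]
For each fixed admissible $u$, the inner supremum over $t\ge 0$ is a one-variable calculus exercise: it equals $0$ if $\langle z,u\rangle\le 0$, and otherwise is attained at $t_\ast=(\langle z,u\rangle/q)^{1/(q-1)}$ with value $\tfrac1p\,q^{-p/q}\langle z,u\rangle^{\,p}$, where I use the elementary identities $\tfrac{q}{q-1}=p$ and $\tfrac1{q-1}=\tfrac pq$.

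Taking then the supremum over $u$ with $\|u\|_{K_C}=1$ and using the polarity identity $\sup\{\langle z,u\rangle:\|u\|_{K_C}\le 1\}=h_{K_C}(z)=\|z\|_{K_C^\circ}$ recalled in Section~\ref{sec:not} (note $h_{K_C}(z)\ge 0$ since $0$ lies in the interior of $K_C$), I arrive at the explicit formula
\[
C^*(z)\;=\;\tfrac1p\,q^{-p/q}\,h_{K_C}(z)^{\,p}\;=\;\tfrac1p\,q^{-p/q}\,\|z\|_{K_C^\circ}^{\,p}\qquad(z\in\R^m),
\]
which is finite everywhere because $q>1$ (consistent with the hypotheses). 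From this, $C^*(z)\le 1$ is equivalent to $\|z\|_{K_C^\circ}\le(p\,q^{p/q})^{1/p}=p^{1/p}q^{1/q}$, i.e.\ to $z\in p^{1/p}q^{1/q}K_C^\circ$; hence $K_{C^*}=q^{1/q}p^{1/p}K_C^\circ$, as claimed.

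I do not expect a genuine obstacle here, as every step is a direct computation. The points deserving a little care are the exponent bookkeeping — checking that the powers of $p$ and $q$ collapse exactly to the factor $q^{1/q}p^{1/p}$ — the treatment of the directions $u$ with $\langle z,u\rangle\le 0$, which contribute $0$ to the inner supremum and therefore do not affect the outer one, and invoking the relation $h_{K_C}=\|\cdot\|_{K_C^\circ}$ (equivalently $h_{K_C}(z)=\max_{w\in K_C}\langle z,w\rangle$) in the correct direction.
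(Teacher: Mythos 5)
Your proof is correct; the computation is accurate and every step checks out. The key formula you derive, $C^*(z)=\tfrac1p\,q^{-p/q}\,h_{K_C}(z)^p$, is precisely the instance of the well-known Legendre duality between $q$-th powers of a norm and $p$-th powers of the dual norm, adapted to the unnormalized $C=\|\cdot\|_{K_C}^q$; the exponent identity $q^{1-p}=q^{-p/q}$ (from $p/q=p-1$) makes your constant match the one obtained by the more standard normalization. Your route differs from the paper's: you compute $C^*$ in closed form by the radial decomposition $y=tu$ and a one-variable maximization, and then read off the sublevel set $\{C^*\le 1\}$; the paper never writes down $C^*$ explicitly. Instead it sets $\ell_q=\max_{t\ge 0}(t^{1/q}-t)=q^{-p/q}p^{-1}$ (which is the same constant multiplying $h_{K_C}^p$ in your formula) and proves the set identity $K_C^\circ=\{x:C^*(x)\le\ell_q\}$ by two short inequality arguments — one for each inclusion — from which the claim follows by $p$-homogeneity of $C^*$. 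The content is essentially the same scalar optimization in both cases, but the paper's version avoids the explicit formula and the sup-over-directions step, trading a closed-form expression for a pair of inclusion checks; your version yields a stronger intermediate output (the exact Legendre transform), which is nice to have explicitly but is not strictly required for the lemma. One small point of care you handled correctly: directions with $\langle z,u\rangle\le 0$ contribute $0$ to the inner supremum and are thus dominated, and $h_{K_C}(z)\ge 0$ holds because $0$ is interior to the symmetric body $K_C$, so the passage from $\sup_u\langle z,u\rangle$ to $h_{K_C}(z)$ is legitimate.
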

\begin{proof}
Firstly, denote $\ell_q := \max_{t \geq 0}\{t^{\frac{1}{q}}-t\} =  q^{-\frac{p}{q}} p^{-1}$.
It suffices to prove that $K_C^\circ = \{x \in \R^m: C^*(x) \leq \ell_q\}$ and the lemma follows.

Take $x \in K_C^\circ$.
Since $C$ is $q$-homogeneous, we have for any $y \in \R^m \setminus \{0\}$ that $C(y)^{-\frac{1}{q}} \; y \in K_C$, thus
\begin{align*}
\langle x, C(y)^{-\frac{1}{q}} \; y \rangle  & \leq 1\\
\langle x, y \rangle & \leq C(y)^{\frac{1}{q}}\\
\langle x,  y  \rangle -C(y) & \leq C(y)^{\frac{1}{q}} - C(y)\leq \ell_q,
\end{align*}
so that $C^*(x) \leq \ell_q$.

Now take $x \in \R^m \setminus K_C^\circ$. By definition of $K_C^\circ$, there exists $y \neq 0$ such that
\begin{align*}
\langle x, y \rangle &> 1 \geq C(y)\\
\langle x,  y  \rangle &> C(y)^{\frac{1}{q}}\\
\hbox{Hence, for any $t > 0$,} \ \  \langle x, t y  \rangle &> (t^q C(y)) ^{\frac{1}{q}}\\
\langle x, t y  \rangle - C(t y) &> ( t^q C(y) )^{\frac{1}{q}} - t^q C(y).
\end{align*}
Since $C(y) > 0$, we may choose $t > 0$ such that the right-hand side is maximized.
Thus, we obtain $C^*(x) > \ell_q$.
\end{proof}

\section{Proof of Theorem \ref{mainthm-p}}
\label{sec:proof}

We now prove Theorem \ref{mainthm-p} by using Lemmas \ref{constant_L} and \ref{GammaCentroid} and the $\Lp$ Petty-Projection inequality.

From the definition of $C_f$, we see that
\begin{align}
\label{mainthm_Cf_identity}
\int_{\R_+^n} C_f^*(\gradient f(y)) dy &= \frac{\alpha_f}{p} \int_{\R^n_+} \left| \frac{\partial f}{\partial t} \right|^p dx dt + \int_{\s^{n-2}} \|\gradient_\xi f\|_\pmas^{1-n} d\xi \\
&= \frac qp Z_p(f)^{1-n} + Z_p (f)^{1-n} \nonumber\\
&= q Z_p (f)^{1-n} \nonumber
\end{align}
where $\frac{1}{p} + \frac{1}{q} =1$.
From Lemma \ref{constant_L} and equation \eqref{K0Size}, we have

\begin{align}
\label{mainthm_LC_identity}
\mathcal{K}_{n,C_f} &= p^p (n-p)^{-\frac{p}{q}}(q(n-1))^{-\frac p {q(n-1)}}\left(\int_{(K_f)_+} y_1^{q n - q - n} dy \right) ^{- \frac p {q(n-1) } }  \\
&= p^p (n-p)^{-\frac{p}{q}}(q(n-1))^{-\frac p {q(n-1)}} \left( \int_0^{q^{\frac{1}{q}}\alpha_f^{\frac{1}{p}}} t^{q n - q - n} \left(1-\frac{\alpha_f^{1-q}}{q} t^q\right)^{\frac{n-1}{q}} \vol(K_{f,0}) dt \right) ^{- \frac p {q(n-1) } } \nonumber\\
&= p^p (n-p)^{-\frac{p}{q}} (q(n-1))^{-\frac p {q(n-1)}}\vol(K_{f,0})^{- \frac p {q(n-1) } } q^{-\frac pq} \alpha_f^{-1} \left( \int_0^1 t^{q n - q - n} \left(1- t^q\right)^{\frac{n-1}{q}} dt \right) ^{- \frac p {q(n-1) } } \nonumber\\
&= p^p (n-p)^{-\frac{p}{q}}(q(n-1))^{-\frac p {q(n-1)}} \left( \frac{1}{q}\frac{\Gamma\left(\frac{n-1}{p}\right)\Gamma\left(\frac{n-1}{q}+1\right)}{\Gamma(n)} \right)^{-\frac{p}{q(n-1)}} q^{-\frac 1q } \vol(K_{f,0})^{- \frac p {q(n-1) } } \alpha_f^{-\frac 1 {p}}, \nonumber
\end{align}
The latter using the fact that

\[\int_0^1 t^{q n - q - n} \left(1- t^q\right)^{\frac{n-1}{q}} dt=\frac{1}{q}\frac{\Gamma\left(\frac{n-1}{p}\right)\Gamma\left(\frac{n-1}{q}+1\right)}{\Gamma(n)}.\]
Putting together \eqref{mainthm_Cf_identity} and \eqref{mainthm_LC_identity}, we obtain

\begin{align*}
\mathcal{K}_{n,C_f} \int_{\R_+^n} C_f^*(\gradient f(y)) dy =&
\left(
p q^{-\frac{1}{pq}} (n-p)^{-\frac{1}{q}}(q(n-1))^{-\frac 1 {q(n-1)}} \left(\frac{\Gamma \left(\frac{n-1}{p}\right) \Gamma \left(\frac{n-1}{q}+1\right)}{q \Gamma (n)}\right)^{-\frac{1}{q(n-1)}}
\right)^p\\
\times& \vol(K_{f,0})^{- \frac p {q(n-1) } } \alpha_f^{- \frac 1p } q Z_p(f)^{1-n}.
\end{align*}
By Lemma \ref{GammaCentroid}, the $L_p$ Busemann-Petty centroid inequality \eqref{eqn:BPI}, \eqref{eqn:Zf}, \eqref{eqn:VolZ} and \eqref{eqn:Ez},

\begin{align*}
\mathcal{K}_{n,C_f} \int_{\R_+^n} C_f^*(\gradient f(y)) dy &\leq
d_1 \E_p(f)^{p-1} \left( \int_{\R^n_+} \left| \frac{\partial f}{\partial t} \right|^p dx dt \right)^{\frac{1}{p}}, \\
\end{align*}
where a careful computation of $d_1$ (that can be found in the appendix) gives the constant in the statement of the Theorem.

Finally, we obtain
\begin{eqnarray}
\label{lasteqs}
\left (\int_{\partial \R^n_+}|f(0,x)|^{\frac{p(n-1)}{n-p}}\; dx\right
)^{\frac{n-p}{n-1}} &\leq& \mathcal{K}_{n,C_f} \int_{\R^n_+} {C_f}^*(\gradient f) dxdt \\
&\leq& d_1 \E_p(f)^{p-1} \left( \int_{\R^n_+} \left| \frac{\partial f}{\partial t} \right|^p dx dt \right)^{\frac{1}{p}}. \nonumber
\end{eqnarray}
\qed\\

\section{Further comments}
\label{sec:end}

As mentioned in introduction, the same ideas used in \cite{L-Y-Z-1} to estimate $\E(f)$, involving H\"older's inequality and Fubini's theorem, produce $\E_p(f) \leq \|\tilde{\nabla} f\|_p$. Thus, using the Young inequality, we get

\begin{eqnarray}
	\label{lasteqs2}
	\left (\int_{\partial \R^n_+}|f(0,x)|^{\frac{p(n-1)}{n-p}}\; dx\right
)^{\frac{n-p}{n-1}} &\leq& d_2 \left( \E_p(f)^p + \left\| \dt f \right\|_p^p \right) \\
&\leq& d_2 \left( \int_{\R^n_+} | \tilde{\gradient} f|^p + \left|\dt f\right|^p dxdt \right) \nonumber
\end{eqnarray}
with $d_2 = (p^{\frac{1}{p}} q^{\frac{1}{q}})^{-\frac{1}{p}} d_1$.

Observe that the inequality $\E_p(f) \leq \|\tilde{\gradient} f\|_p$ becomes an equality for functions which are radial with respect to the variable $x$.
On the other hand, taking $C^*(t, x) = |t|^p + |x|^p$ with Euclidean norm in $x$, Theorem \ref{nazaret} provides the sharp inequality
\begin{eqnarray*}
\left (\int_{\partial \R^n_+}|f(0,x)|^{\frac{p(n-1)}{n-p}}\; dx\right
)^{\frac{n-p}{n-1}} &\leq& \mathcal{K}_{n,C} \int_{\R^n_+} {C}^*(\gradient f) dxdt\\
&\leq& \mathcal{K}_{n,C} \left( \int_{\R^n_+} \left|\dt f\right|^p  + | \tilde{\gradient} f|^p dxdt \right)
\end{eqnarray*}
and a careful calculation of the constant above using the formula of Lemma \ref{constant_L} gives exactly $d_2$.
This is the reason for the choice of the constants in Definition \ref{Cfdefinition}.
Since the extremal functions \eqref{extremalsC} corresponding to the previous inequality are radial with respect to $x$, we conclude that the inequalities \eqref{lasteqs2} and thus \eqref{lasteqs} are sharp.

Now if $f$ is an extremal function of equation \eqref{main_affine_trace_ineq-p} then the equality on $\vol(K_{f,0}) = \vol(\Gamma_p K_{f,0})$ implies that $K_{f,0}$ is an ellipsoid.
Thus after a $\gl_{n,+}$ change of coordinates, we may assume $K_{f,0}$ is a ball and $C_f^*$ is the function $C^*$ defined in the above paragraph.

For the special case $p=2$, the extremal functions where characterized by Escobar in \cite{Escobar}. Then the extremal functions of equation \eqref{main_affine_trace_ineq} are exactly the ones described in Theorem \ref{mainthm}.

Since the characterization of the extremal functions in the general case $1 < p < n$ was left open by Nazaret in \cite{Na} we cannot conclude that $f$ is of the form \eqref{extremalsC}.
But this is the only step left to characterize the extremal functions of \eqref{main_affine_trace_ineq-p}.

In the case the aforementioned characterization is done, there is an other important conclusion and that is that \eqref{main_affine_trace_ineq-p} is not stronger than the classical sharp trace inequality
\[\left (\int_{\partial \R^n_+}|f(0,x)|^{\frac{p(n-1)}{n-p}}\; dx\right
)^{\frac{n-p}{n-1}}\leq \mathcal{K}_{n,p} \int_{\R^n_+}|\nabla f(t,x)|^p\; dx dt\]
simply because they have different families of extremal functions.

\section{Appendix}

In this appendix, for sake of completness, we compute the constant $d_1$ that appears in the proof of Theorem \ref{mainthm-p}.

Firstly,

\begin{align*}
\mathcal{L}_{C_f}^p \int_{\R_+^n} C_f^*(\gradient f(y)) dy &=
\left( p q^{-\frac{1}{pq}} (n-p)^{-\frac{1}{q}}(q(n-1))^{-\frac 1 {q(n-1)}} \left(\frac{\Gamma \left(\frac{n-1}{p}\right) \Gamma \left(\frac{n-1}{q}+1\right)}{q \Gamma (n)}\right)^{-\frac{1}{q(n-1)}}
\right)^p\\
&\times \vol(K_{f,0})^{- \frac p {q(n-1) } } \alpha_f^{- \frac 1p } q Z_p(f)^{1-n}\\
& = p^p q^{-\frac{1}{q}} (n-p)^{-p/q} \left(\frac{(n-1)\Gamma \left(\frac{n-1}{p}\right) \Gamma \left(\frac{n-1}{q}+1\right)}{\Gamma (n)}\right)^{-\frac{p}{q(n-1)}}
\\
(\mbox{using } (\ref{eqn:VolKo}))  \quad &\times \left[\left(p q^{\frac pq } (n+p-1) a_{n-1,p}\right)^{\frac{n-1}{p}} \vol(L_f)^{\frac{n-1}p} \vol(\Gamma_p L_f)\right]^{- \frac p {q(n-1) } }\\
&\times \alpha_f^{- \frac 1p } q Z_p(f)^{1-n}\\
& =p^p q^{-\frac{1}{q}} (n-p)^{-p/q} \left(\frac{(n-1)\Gamma \left(\frac{n-1}{p}\right) \Gamma \left(\frac{n-1}{q}+1\right)}{\Gamma (n)}\right)^{-\frac{p}{q(n-1)}}
\\
 \quad &\times \left(p q^{\frac pq } (n+p-1) a_{n-1,p}\right)^{-\frac{1}{q}} \vol(L_f)^{\frac{-1}{q}} \vol(\Gamma_p L_f)^{- \frac p {q(n-1) } }\\
&\times \alpha_f^{- \frac 1p } q Z_p(f)^{1-n}\\
(\mbox {Using B-P ineq for }L_f)&\leq p^{p-\frac{1}{q}} q^{-\frac{1}{q}-\frac{p}{q^2}+1} (n-p)^{-p/q} \left(\frac{(n-1)\Gamma \left(\frac{n-1}{p}\right) \Gamma \left(\frac{n-1}{q}+1\right)}{\Gamma (n)}\right)^{-\frac{p}{q(n-1)}}
\\
 \quad &\times  \left((n+p-1) a_{n-1,p}\right)^{\frac{-1}{q}}  \vol( L_f)^{- \frac {p+n-1} {q(n-1) } }\\
&\times \alpha_f^{- \frac 1p } Z_p(f)^{1-n}\\
&=p^{p-\frac{1}{q}} q^{-\frac{1}{q}-\frac{p}{q^2}+1} (n-p)^{-p/q} \left(\frac{(n-1)\Gamma \left(\frac{n-1}{p}\right) \Gamma \left(\frac{n-1}{q}+1\right)}{\Gamma (n)}\right)^{-\frac{p}{q(n-1)}}
\\
 (\mbox{Using }(\ref{eqn:VolZ})) \quad &\times \left((n+p-1) a_{n-1,p}\right)^{\frac{-1}{q}}  \left(\frac{1}{n-1}c_{n-1,p}^{n-1}\right)^{-\frac {p+n-1} {q(n-1) } }\\&\times\E_p(f)^{\frac{n+p-1}{q}}\alpha_f^{- \frac 1p } Z_p(f)^{1-n}\\
 &=p^{p-\frac{1}{q}} q^{-\frac{1}{q}-\frac{p}{q^2}+1} (n-p)^{-p/q} \left(\frac{(n-1)\Gamma \left(\frac{n-1}{p}\right) \Gamma \left(\frac{n-1}{q}+1\right)}{\Gamma (n)}\right)^{-\frac{p}{q(n-1)}}
\\
 \quad &\times \left((n+p-1) a_{n-1,p}\right)^{\frac{-1}{q}}  \left(\frac{1}{n-1}c_{n-1,p}^{n-1}\right)^{-\frac {p+n-1} {q(n-1) } }\\
 (\mbox{By definition of }\alpha_f) &\times\E_p(f)^{\frac{n+p-1}{q}}\left( q  \left\| \dt f \right\|_\pmas^{-p} Z_p(f)^{1-n}\right)^{- \frac 1p }  Z_p(f)^{1-n}\\
 &=p^{p-\frac{1}{q}} q^{-\frac{p}{q^2}} (n-p)^{-p/q} \left(\frac{(n-1)\Gamma \left(\frac{n-1}{p}\right) \Gamma \left(\frac{n-1}{q}+1\right)}{\Gamma (n)}\right)^{-\frac{p}{q(n-1)}}
\\
 \quad &\times \left((n+p-1) a_{n-1,p}\right)^{\frac{-1}{q}}  \left(\frac{1}{n-1}c_{n-1,p}^{n-1}\right)^{-\frac {p+n-1} {q(n-1) } }\\
  &\times\E_p(f)^{\frac{n+p-1}{q}} \left\| \dt f \right\|_\pmas  Z_p(f)^{1-n+\frac{n-1}{p}}\\
&=p^{p-\frac{1}{q}} q^{-\frac{p}{q^2}} (n-p)^{-p/q} \left(\frac{(n-1)\Gamma \left(\frac{n-1}{p}\right) \Gamma \left(\frac{n-1}{q}+1\right)}{\Gamma (n)}\right)^{-\frac{p}{q(n-1)}}
\\
 \quad &\times \left((n+p-1) a_{n-1,p}\right)^{\frac{-1}{q}}  \left(\frac{1}{n-1}c_{n-1,p}^{n-1}\right)^{-\frac {p+n-1} {q(n-1) } }\\
 (\mbox{Using } (\ref{eqn:Ez})) &\times\E_p(f)^{\frac{n+p-1}{q}} \left\| \dt f \right\|_\pmas c_{n-1,p}^{(n-1)-\frac{n-1}{p}} \E_p(f)^{1-n+\frac{n-1}{p}}\\
   &=p^{p-\frac{1}{q}} q^{-\frac{p}{q^2}} (n-p)^{-p/q} \left(\frac{(n-1)\Gamma \left(\frac{n-1}{p}\right) \Gamma \left(\frac{n-1}{q}+1\right)}{\Gamma (n)}\right)^{-\frac{p}{q(n-1)}}
\\
\quad &\times \left(n+p-1\right)^{\frac{-1}{q}}\left( a_{n-1,p}\right)^{\frac{-1}{q}}  \left(\frac{1}{n-1}\right)^{-\frac {p+n-1} {q(n-1) } }\\
  &\times\left\| \dt f \right\|_\pmas c_{n-1,p}^{(n-1)-\frac{n-1}{p}-\frac {p+n-1} {q }}  \E_p(f)^{1-n+\frac{n-1}{p}+\frac{n+p-1}{q}}\\
  &=p^{p-\frac{1}{q}} q^{-\frac{p}{q^2}} (n-p)^{-p/q} \left(\frac{(n-1)\Gamma \left(\frac{n-1}{p}\right) \Gamma \left(\frac{n-1}{q}+1\right)}{\Gamma (n)}\right)^{-\frac{p}{q(n-1)}}
\\
\quad &\times \left(n+p-1\right)^{\frac{-1}{q}}\left( a_{n-1,p}\right)^{\frac{-1}{q}}  \left(\frac{1}{n-1}\right)^{-\frac {p+n-1} {q(n-1) } }c_{n-1,p}^{\frac{-p}{q}}\\
  &\times\left\| \dt f \right\|_\pmas  \E_p(f)^{\frac{p}{q}}.\\
\\
\end{align*}
Let us now denote
\begin{align*}
\mathcal{A}_{n,p} &= \frac{1}{p} \times p^{p-\frac{1}{q}} q^{-\frac{p}{q^2}} (n-p)^{-\frac{p}{q}} (n+p-1)^{-\frac{1}{q}} \left(\frac{1}{n-1}\right)^{-\frac{n+p-1}{(n-1) q}}\\
&\times   \left(\frac{(n-1) \Gamma \left(\frac{n-1}{p}\right) \Gamma \left(\frac{n-1}{q}+1\right)}{\Gamma (n)}\right)^{-\frac{p}{(n-1) q}} \left(a_{n-1,p} c_{n-1,p}^p\right){}^{-\frac{1}{q}}.
\end{align*}
But,

\begin{align*}
a_{n-1,p} c_{n-1,p}^p
&= \frac{\left(2^{-\frac{1}{p}} \left((n-1) \omega _{n-1}\right){}^{\frac{1}{n-1}} \left(\frac{(n-1) \omega _{n-1} \omega _{p-1}}{\omega _{n+p-3}}\right){}^{\frac{1}{p}}\right){}^p \omega _{n+p-1}}{\omega _2 \omega _{n-1} \omega _{p-1}}\\
&= \frac{(n-1) \left((n-1) \omega _{n-1}\right){}^{\frac{p}{n-1}} \omega _{n+p-1}}{2 \omega _2 \omega _{n+p-3}}\\
&= \frac{\pi  (n-1) \left((n-1) \omega _{n-1}\right){}^{\frac{p}{n-1}}}{\omega _2 (n+p-1)}\\
&= \frac{\pi  (n-1)^{\frac{n+p-1}{n-1}} \omega _{n-1}^{\frac{p}{n-1}}}{\omega _2 (n+p-1)}.
\end{align*}
Then we obtain that
\begin{align*}
\mathcal{A}_{n,p} &= p^{p - 1 -\frac{1}{q}} q^{-\frac{p}{q^2}} \left(\frac{1}{n-1}\right)^{-\frac{n+p-1}{(n-1) q}} (n-p)^{-\frac{p}{q}} (n+p-1)^{-\frac{1}{q}} \\
&\times  \left(\frac{(n-1) \Gamma \left(\frac{n-1}{p}\right) \Gamma \left(\frac{n-1}{q}+1\right)}{\Gamma (n)}\right)^{-\frac{p}{(n-1) q}} \left(\frac{\pi ^{p/2} (n-1)^{\frac{p}{n-1}+1} \Gamma \left(\frac{n+1}{2}\right)^{\frac{p}{1-n}}}{n+p-1}\right)^{-\frac{1}{q}}\\
&= p^{p - 1 -\frac{1}{q}} \pi ^{-\frac{p}{2 q}} q^{-\frac{p}{q^2}} (n-p)^{-\frac{p}{q}} \left(\frac{(n-1) \Gamma \left(\frac{n-1}{p}\right) \Gamma \left(\frac{n-1}{q}+1\right)}{\Gamma (n) \Gamma \left(\frac{n+1}{2}\right)}\right)^{-\frac{p}{(n-1) q}}\\
&= p^{\frac{1}{p} - 1} \pi ^{\frac{1}{2}-\frac{p}{2}} q^{\frac{1}{q}} \left(\frac{n-p}{p-1}\right)^{1-p} \left(\frac{\Gamma (n) \Gamma \left(\frac{n+1}{2}\right)}{(n-1) \Gamma \left(\frac{n-1}{p}\right) \Gamma \left(\frac{n}{q}+\frac{1}{p}\right)}\right)^{\frac{p-1}{n-1}}\\
&= \pi^{-\frac{p-1}{2}} \left(\frac{(p - 1)^{\frac{p-1}{p}}}{n - p}\right)^{p-1} \left(\frac{ \Gamma (n) \Gamma \left(\frac{n+1}{2}\right)}{(n-1)\Gamma \left(\frac{n-1}{p}\right) \Gamma \left(\frac{n (p-1)+1}{p}\right)}\right)^{\frac{p-1}{n-1}},
\end{align*}
\bigskip
where $d_1 = p \mathcal{A}_{n,p}$ is the optimal constant that appears in our main theorem.

\end{document}